\documentclass{amsart}
\usepackage[all]{xy}
\usepackage{amsmath,amssymb,dsfont,mathtools,nicefrac}
\usepackage[pdftitle={A functorial equivariant K-theory spectrum and an equivariant Lefschetz formula},
  pdfauthor={Ivo Dell'Ambrogio, Tamaz Kandelaki, Ralf Meyer},
  pdfsubject={Mathematics}
]{hyperref}
\usepackage[lite]{amsrefs}
\usepackage{microtype}

\newcommand*{\MRref}[2]{ \href{http://www.ams.org/mathscinet-getitem?mr=#1}{MR \textbf{#1}}}

\renewcommand{\PrintDOI}[1]{\href{http://dx.doi.org/#1}{DOI #1}%
  \IfEmptyBibField{pages}{, (to appear in print)}{}}

\newtheorem{thm}{Theorem}[section]

\newtheorem{lem}[thm]{Lemma}
\newtheorem{prop}[thm]{Proposition}
\theoremstyle{definition}
\newtheorem{defn}[thm]{Definition}
\theoremstyle{remark}
\newtheorem{rem}[thm]{Remark}
\newtheorem{example}[thm]{Example}
\numberwithin{equation}{section}

\DeclareMathOperator{\Rep}{R}
\DeclareMathOperator{\Hom}{Hom}
\DeclareMathOperator{\tr}{tr}
\DeclareMathOperator{\Ho}{Ho}
\newcommand{\colim}{\mathop{\mathrm{colim}}}

\newcommand*{\hats}{{\widehat{S}}}
\newcommand*{\grot}{\mathbin{\widehat\otimes}}
\newcommand*{\grok}{\widehat\Comp}

\newcommand*{\Unit}{\mathds{1}}
\newcommand*{\Tri}{\mathcal T}
\newcommand*{\Hilb}{\mathcal H}
\newcommand*{\Cat}{\mathcal C}
\newcommand*{\Cliff}{\mathbb C\textup l}
\newcommand*{\KK}{\textup{KK}}
\newcommand*{\KKcat}{\mathfrak{KK}}

\newcommand*{\mini}{\textup{min}}
\newcommand*{\braid}{\textsf{braid}}

\newcommand*{\N}{\mathbb N}
\newcommand*{\Z}{\mathbb Z}
\newcommand*{\R}{\mathbb R}
\newcommand*{\C}{\mathbb C}
\newcommand*{\K}{\textup K}
\newcommand*{\Mat}{\mathbb M}
\newcommand*{\Comp}{\mathcal K}
\newcommand*{\Sphere}{\mathbb S}

\newcommand*{\Sym}{{\mathfrak{Sp}^\Sigma}}
\newcommand*{\KG}{\mathbf K^G}

\newcommand*{\inOb}{\mathrel{\in\in}\nobreak}

\newcommand*{\Ccat}{\mathcal{C}}
\newcommand*{\Dcat}{\mathcal{D}}
\newcommand*{\Cstarcat}{ {\mathfrak{C}^*\!\mathfrak{sep}} }

\newcommand*{\Cst}{\textup{C}^*}
\newcommand*{\Kg}{\textup K^G}

\newcommand*{\Cont}{\textup C}

\newcommand*{\KKt}{\textup{KK}}
\newcommand*{\id}{\textup{id}}
\newcommand*{\Mod}{\mathfrak{Mod}}
\newcommand*{\loc}{\textup{loc}}

\newcommand*{\nb}{\nobreakdash}
\newcommand*{\defeq}{\mathrel{\vcentcolon=}}
\newcommand*{\congto}{\xrightarrow\cong}
\newcommand*{\blank}{\textup{\textvisiblespace}}

\newcommand*{\abs}[1]{\lvert#1\rvert}

\begin{document}
\title[K-theory spectrum and Lefschetz formula]{A functorial equivariant K-theory spectrum and an equivariant Lefschetz formula}

\author{Ivo Dell'Ambrogio}
\address{Universit\"at Bielefeld, Fakult\"at f\"ur Mathematik, BIREP Gruppe, Postfach 10\,01\,31, 33501 Bielefeld, Germany}
\email{ambrogio@math.uni-bielefeld.de}

\author{Heath Emerson}
\address{Department of Mathematics and Statistics, University of Victoria, PO BOX 3045\,STN\,CSC, Victoria, B.\,C., Canada V8W\,3P4}
\email{hemerson@math.uvic.ca}

\author{Tamaz Kandelaki}
\address{A.~Razmadze Mathematical Institute, Tbilisi State University, University Street~2, Tbilisi 380043, Georgia; Tbilisi Centre for Mathematical Sciences, Chavchavadze Ave.~75, 3/35, Tbilisi 0168, Republic of Georgia}
\email{tam.kandel@gmail.com}

\author{Ralf Meyer}
\address{Mathematisches Institut and Courant Centre ``Higher order structures'', Georg-August Universit\"at G\"ottingen, Bunsenstra{\ss}e 3--5, 37073 G\"ottingen, Germany}
\email{rameyer@uni-math.gwdg.de}

\thanks{This research was supported by the Volkswagen Foundation (Georgian--German non-commutative partnership).  Ralf Meyer was supported by the German Research Foundation (Deutsche Forschungsgemeinschaft (DFG)) through the Institutional Strategy of the University of G\"ottingen.}
\subjclass[2000]{19K99, 19K35, 19D55}

\begin{abstract}
  We construct a symmetric spectrum representing the
  \(G\)\nb-equivariant K\nb-theory of \(\Cst\)\nb-algebras for a
  compact group or a proper groupoid~\(G\).  Our spectrum is
  functorial for equivariant \(*\)\nb-homomorphisms.  We use this to
  establish the additivity of the canonical traces for endomorphisms
  of strongly dualisable objects in the bootstrap class in~\(\KKt^G\),
  in analogy to previous results for traces in stable homotopy theory.
  As an application, we prove an equivariant analogue of the Lefschetz
  trace formula for Hodgkin Lie groups.
\end{abstract}

\maketitle

\section{Introduction}
\label{sec:intro}

Let~\(\Ccat\) be a symmetric monoidal category with tensor product~\(\otimes\) and tensor unit~\(\Unit\).  An object~\(A\) of~\(\Ccat\) is called \emph{dualisable} if there is an object~\(A^*\), called its \emph{dual}, and a natural isomorphism
\[
\Ccat(A\otimes B,C) \cong \Ccat(B,A^*\otimes C)
\]
for all objects \(B\) and~\(C\) of~\(\Ccat\).  Such duality isomorphisms exist if and only if there are two morphisms \(\eta\colon \Unit\to A^*\otimes A\) and \(\varepsilon\colon A\otimes A^*\to \Unit\), called unit and counit of the duality, that satisfy two appropriate conditions.  Let \(f\colon A\to A\) be an endomorphism in~\(\Cat\).  Then the \emph{trace} of~\(f\) is the composite endomorphism
\[
\Unit \xrightarrow{\eta} A^*\otimes A \xrightarrow{\braid}
A\otimes A^* \xrightarrow{f\otimes \id_{A^*}}
A\otimes A^* \xrightarrow{\varepsilon} \Unit,
\]
where~\(\braid\) denotes the braiding isomorphism.

We want to compute such traces in the case where~\(\Cat\) is the equivariant Kasparov category \(\KKcat^G\) of separable \(G\)\nb-\(\Cst\)-algebras for a compact group~\(G\).  Here~\(\otimes\) is the minimal \(\Cst\)\nb-algebra tensor product equipped with the diagonal action of~\(G\), and~\(\Unit\) is the \(\Cst\)\nb-algebra of complex numbers; the endomorphism ring of~\(\Unit\) is the representation ring of~\(G\), \(\KK^G_0(\Unit,\Unit) \cong \Rep(G)\).  More generally, our construction still works if~\(G\) is a proper groupoid, in which case \(\otimes\) is the tensor product over the object space~\(G^0\) of the groupoid and \(\Unit=\Cont_0(G^0)\) (see also~\cite{Emerson-Meyer:Dualities}).

If~\(G\) is trivial, \(A=\Cont(X)\) for a smooth compact
manifold and \(f\in \KK(A,A)\) comes from a self-map with
simple isolated fixed points, then the Kasparov product that
gives its trace in \(\KK(\C,\C)=\Z\) may be computed directly
as in~\cite{Emerson-Meyer:Equi_Lefschetz}, and the result is
the Lefschetz number of~\(f\), expressed as a sum of
contributions from the fixed points of~\(f\).  By the Lefschetz
Fixed Point Theorem, this is equal to the alternating sum of
the maps induced by~\(f\) on the rational cohomology groups
of~\(X\).  Here we may replace rational cohomology by
\(\K\)\nb-theory because of the Chern character.  Our goal is
to establish a \(G\)\nb-equivariant generalisation of this
result for suitable compact groups~\(G\).

The trace in the sense of symmetric monoidal categories is the
analogue of the local expression of the Lefschetz invariant in terms
of fixed points.  The analogue of the global homological formula for
the Lefschetz invariant is the graded Hattori--Stallings trace of the
action of~\(f\) on \(\K^G_*(A) \defeq \KK^G_*(\Unit,A)\), viewed as a
module over the ring \(\Rep(G) \defeq \KK^G_*(\Unit,\Unit)\).  This
Hattori--Stallings trace is defined if \(\K^G_*(A)\) has a finite
projective \(\Rep(G)\)-module resolution.

\begin{thm}
  \label{thm:HS_for_KKG}
  Let~\(G\) be a connected compact Lie group with torsion-free fundamental group.  Let~\(A\) be a separable \(G\)\nb-\(\Cst\)-algebra that is non-equivariantly \(\KK\)-equivalent to a commutative \(\Cst\)\nb-algebra.  Assume that \(\K^G_*(A)\) is finitely generated as an \(\Rep(G)\)-module.  Then~\(A\) is dualisable and \(\tr f = \tr \K^G_*(f)\) for any \(f\in \KK^G_0(A,A)\).
\end{thm}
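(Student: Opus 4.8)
The plan is to push the whole problem into the equivariant bootstrap class \(\BFr^G\subseteq\KKcat^G\) — the localising subcategory generated by the tensor unit~\(\Unit\) — realise~\(A\) there as a finite ``cell object'' built from retracts of sums of suspensions of~\(\Unit\), and then run the additivity of traces along the resulting cellular filtration, reducing \(\tr f\) to a sum of ordinary matrix traces over \(\Rep(G)=\End_{\KKcat^G}(\Unit)\) which assemble, by definition, to the Hattori--Stallings trace. Concretely, I would first argue that~\(A\) is a dualisable object of~\(\BFr^G\). Because~\(G\) is a Hodgkin Lie group, the equivariant bootstrap property is inherited from the non-equivariant one, so the hypothesis on~\(A\) puts it in~\(\BFr^G\). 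Now \(\BFr^G\) is rigidly-compactly generated: \(\Unit\) is a compact generator and, being the unit, is dualisable, so the compact objects of \(\BFr^G\) form the thick subcategory generated by~\(\Unit\) and every compact object is dualisable — in \(\BFr^G\), hence in \(\KKcat^G\), since \(\BFr^G\) is a full \(\otimes\)\nb-subcategory and dualisability is witnessed by internal unit/counit data. Again using the Hodgkin hypothesis, \(\Rep(G)\) is regular Noetherian, in particular of finite global dimension, so the finitely generated \(\Z/2\)\nb-graded \(\Rep(G)\)-module \(\K^G_*(A)\) has a finite resolution \(0\to P_n\to\dots\to P_0\to\K^G_*(A)\to 0\) by finitely generated projective modules. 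By the universal coefficient theorem available for \(\KKcat^G\) over \(\Rep(G)\) for Hodgkin~\(G\), an object of \(\BFr^G\) with perfect K\nb-theory is compact; so~\(A\) is compact, hence dualisable, which already gives the first assertion.

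More precisely, the universal coefficient theorem lets one realise the resolution by a finite tower of exact triangles \(\widetilde P_i\to B_i\to B_{i-1}\xrightarrow{+1}\) with \(B_{-1}=0\), \(B_n\simeq A\), in which each \(\widetilde P_i\) is a retract of a finite direct sum of suspensions of~\(\Unit\) and \(\K^G_*(\widetilde P_i)\cong P_i\) in the appropriate internal degrees (at stage~\(i\) one picks a finitely generated projective cover of the \(i\)-th syzygy, realises it by such a \(\widetilde P_i\) using that projectives carry no higher \(\textup{Ext}\), and passes to the fibre). Since~\(A\) is dualisable we may form \(\tr f\in\KK^G_0(\Unit,\Unit)=\Rep(G)\). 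Projectivity of the \(P_i\) lets us lift~\(f\) compatibly through the whole tower: at each stage the required lift exists already on K\nb-theory (a projective module maps onto anything it surjects onto), and the universal coefficient theorem promotes it to \(\KKcat^G\). This produces endomorphisms \(\widetilde f_i\colon\widetilde P_i\to\widetilde P_i\) assembling into maps of all the triangles \(\widetilde P_i\to B_i\to B_{i-1}\), with \(\K^G_*(\widetilde f_\bullet)\) a chain-lift of \(\K^G_*(f)\) to \(P_\bullet\). All the objects involved are dualisable and lie in \(\BFr^G\), so the additivity of traces for strongly dualisable objects of the bootstrap class — the main consequence of the functorial K\nb-theory spectrum constructed above — applies to each triangle, and, iterated down the tower while tracking the suspension signs, yields
\[
\tr f \;=\; \sum_{i=0}^{n}(-1)^{i}\,\tr\widetilde f_i .
\]

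It remains to identify each \(\tr\widetilde f_i\). Since \(\widetilde P_i\) is a retract of a finite sum of suspensions of~\(\Unit\) and \(\Rep(G)\) is commutative, the categorical trace of any endomorphism of such a sum is the matrix super-trace of its defining matrix over \(\Rep(G)\) (the odd suspensions supplying the signs); cutting down by the idempotent defining \(\widetilde P_i\) identifies \(\tr\widetilde f_i\) with the Hattori--Stallings trace of \(\K^G_*(\widetilde f_i)\) on the finitely generated projective \(\Rep(G)\)-module \(P_i\). Summing over~\(i\) and recalling that \(\K^G_*(\widetilde f_\bullet)\) is a lift of \(\K^G_*(f)\) to the finite projective resolution \(P_\bullet\), the right-hand side of the displayed identity is by definition \(\tr\K^G_*(f)\), which completes the proof.

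The main obstacle is the first paragraph: one needs (a) that for a Hodgkin group the non-equivariant bootstrap hypothesis on~\(A\) genuinely forces membership in \(\BFr^G\), and (b) a universal coefficient theorem for \(\KKcat^G\) over the — in general higher-dimensional — regular ring \(\Rep(G)\), strong enough to convert a finite projective resolution into an honest finite tower of cells and to lift endomorphisms through it. Both are exactly where the Hodgkin condition on~\(G\) is indispensable; granting them, the remaining steps are formal, modulo the routine but delicate bookkeeping of the \(\Z/2\)\nb-grading on \(\K^G_*(A)\) and the Koszul signs attached to the suspensions in the cell structure.
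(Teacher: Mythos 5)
Your overall route is the paper's: Meyer--Nest for Hodgkin groups puts~\(A\) into \(\langle\Unit\rangle_\loc\), regularity and Noetherianity of \(\Rep(G)\) give a finite resolution of \(\K^G_*(A)\) by finitely generated projectives, the resolution is realised by a finite tower of triangles whose cells are retracts of finite sums of suspensions of~\(\Unit\), \(f\)~is lifted through the tower, additivity of traces is applied triangle by triangle, and the cell traces are identified with matrix supertraces over \(\Rep(G)\), assembling to the Hattori--Stallings trace (your alternating signs match the paper's degree-one convention). The gap is exactly the point you flag: you make the construction of the tower, the compactness/dualisability of~\(A\), and the promotion of \(\Rep(G)\)-module maps to \(\KK^G\)-morphisms all rest on ``a universal coefficient theorem for \(\KKcat^G\) over \(\Rep(G)\)''. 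For a Hodgkin group of positive rank, \(\Rep(G)\) has global dimension greater than one, so there is no two-term UCT; what exists is only a spectral sequence, and it does not let you lift a module map to a \(\KK^G\)-class, nor produce the attaching maps \(B_{i-1}\to \widetilde P_i[1]\) \emph{into} cells that your bottom-up tower needs. As written, this central step is unproven, and no citable result supplies it in the strength you invoke.

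The paper closes this hole without any UCT, via the phantom tower of \cite{Meyer:Homology_in_KK_II}, and the key observation is that every map it uses goes \emph{out of} the cell objects, never into them. One sets \(\hat P_j \defeq M^\bot(P_j)\), where \(M^\bot\) is characterised by \(\Tri(M^\bot(P),B)\cong\Hom_R(P,\Tri_*(\Unit,B))\); hence morphisms out of \(\hat P_j\) \emph{are} module maps, which suffices to build the tower downwards from~\(A\) (your parenthetical ``projective cover of the syzygy, pass to the fibre'' is precisely this construction, and needs no UCT). Termination --- the analogue of your \(B_n\simeq A\) --- is not a UCT statement either: the connecting maps of the tower are phantom and \(\hat P_j=0\) for \(j>\ell\), so the objects \(N_j\) with \(j>\ell\) have vanishing \(\K^G_*\), and they vanish because they lie in the localising subcategory generated by~\(\Unit\) and \(\Unit\) is a generator. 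Likewise the lift \(\hat F_j\) of~\(F_j\) uses only projectivity of~\(P_j\) and surjectivity of \(\K^G_*(\pi_j)\), and dualisability of~\(A\) then comes for free, since the finite tower places~\(A\) in the thick subcategory \(\langle\Unit\rangle\), which coincides with the dualisable objects by Proposition~\ref{prop:dualisable is compact}. (The paper packages all of this as the abstract Theorem~\ref{the:HS_trace} and deduces Theorem~\ref{thm:HS_for_KKG} as a special case.) With your flagged step replaced by this argument, the remainder of your proof agrees with the paper's.
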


This equivariant Lefschetz Fixed Point Theorem depends on the
following Additivity Theorem for traces:

\begin{thm}
  \label{the:additivity_trace}
  Let \(A\to B\to C\to A[1]\) be an exact triangle in the thick triangulated subcategory of~\(\KKcat^G\) generated by~\(\Unit\) \textup(thus in particular \(A\), \(B\) and \(C\) are dualisable\textup).  If the left square in the following diagram
  \[
  \xymatrix{
    A \ar[r] \ar[d]^{f_A}&
    B \ar[r] \ar[d]^{f_B}&
    C \ar[r] \ar@{.>}[d]^{f_C}&
    A[1] \ar[d]^{f_A[1]}\\
    A \ar[r]&
    B \ar[r]&
    C \ar[r]&
    A[1]
  }
  \]
  commutes, then there is an arrow \(f_C\in \KK_0^G(C,C)\) making the whole diagram commute, and such that \(\tr(f_C) - \tr(f_B) + \tr(f_A) = 0\).
\end{thm}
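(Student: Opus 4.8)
The plan is to transport the trace identity along the functorial equivariant K\nb-theory spectrum built in this paper and then invoke the known additivity of traces in stable homotopy theory. To begin with, the parenthetical claim is automatic: the tensor unit~$\Unit$ is strongly dualisable, and in any tensor triangulated category the full subcategory of strongly dualisable objects is thick and closed under~$\otimes$, so every object of the thick subcategory generated by~$\Unit$ is strongly dualisable. Likewise, the existence of \emph{some} $f_C\in\KK^G_0(C,C)$ completing the diagram to a morphism of exact triangles is just the completion axiom (TR3) in the triangulated category~$\KKcat^G$. Hence the real content of the theorem is the vanishing of $\tr(f_C)-\tr(f_B)+\tr(f_A)$ in the ring $\KK^G_0(\Unit,\Unit)\cong\Rep(G)$.

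Write~$\mathcal D$ for the homotopy category of the symmetric monoidal stable model category of module spectra over the commutative symmetric ring spectrum $\KG(\Unit)$; its tensor unit is $\KG(\Unit)$, and $\pi_0$ of the endomorphism ring spectrum of the unit is $\Rep(G)$. The construction of this paper provides an exact, unital, lax symmetric monoidal functor
\[
\KG\colon \KKcat^G \longrightarrow \mathcal D
\]
sending~$\Unit$ to the tensor unit $\KG(\Unit)$ and inducing on endomorphisms of the unit the isomorphism $\KK^G_0(\Unit,\Unit)\cong\Rep(G)\cong\pi_0\KG(\Unit)$ that expresses that the spectrum represents equivariant K\nb-theory; in particular this map is injective. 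Since~$\KG$ is unital and lax symmetric monoidal it carries strongly dualisable objects to strongly dualisable objects and commutes with the formation of traces, so $\KG(\tr g)=\tr\bigl(\KG(g)\bigr)$ in~$\Rep(G)$ for every endomorphism~$g$ of an object of the thick subcategory generated by~$\Unit$; and since~$\KG$ is exact it carries the given exact triangle, together with its self-map, to an exact triangle
\[
\KG(A)\longrightarrow\KG(B)\longrightarrow\KG(C)\longrightarrow\KG(A)[1]
\]
of strongly dualisable $\KG(\Unit)$\nb-modules equipped with the endomorphism $\bigl(\KG(f_A),\KG(f_B),\KG(f_C)\bigr)$.

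By injectivity of the map on unit endomorphisms, it now suffices to prove $\tr\bigl(\KG(f_C)\bigr)-\tr\bigl(\KG(f_B)\bigr)+\tr\bigl(\KG(f_A)\bigr)=0$ in~$\mathcal D$. But this is exactly the additivity of traces along a distinguished triangle of strongly dualisable objects in the homotopy category of a symmetric monoidal stable model category, which is the stable-homotopy-theoretic input alluded to in the introduction (a consequence of May's additivity theorem, whose axioms hold in such a category); applying it to the triangle and self-map constructed above yields the identity, and the theorem follows.

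The substance of the proof is therefore in the inputs rather than in the argument: what must be checked is that the equivariant K\nb-theory spectrum is functorial and lax symmetric monoidal \emph{as a functor out of the Kasparov category}~$\KKcat^G$ — not merely out of $G$\nb-$\Cst$\nb-algebras and equivariant $*$\nb-homomorphisms — and that it is exact; this is precisely what occupies the bulk of this paper. I also want to stress why the detour through symmetric spectra is genuinely needed: the braiding of~$\KKcat^G$ is not known to be compatible with its triangulation in the sense required by May's additivity theorem, so one cannot apply that theorem directly inside~$\KKcat^G$, whereas moving the statement into the homotopy category of an honest symmetric monoidal stable model category makes all of the required compatibilities available.
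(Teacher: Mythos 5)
Your overall strategy coincides with the paper's: push the problem through the K\nb-theory spectrum functor into \(\Ho(\KG(\Unit)\textrm-\Mod)\), where additivity of traces holds by the symmetric monoidal model structure of Mandell--May--Schwede--Shipley together with May's theorem. But there are two genuine gaps at the transfer step. First, you assert that because \(\KG\) is unital and \emph{lax} symmetric monoidal it preserves strongly dualisable objects and commutes with traces. That is false for lax monoidal functors in general: to transport a duality \((\eta,\varepsilon)\) you must invert the structure map \(c\colon \KG(A)\wedge_{\KG(\Unit)}\KG(A^*)\to \KG(A\otimes A^*)\) (the candidate coevaluation is \(c^{-1}\circ \KG(\eta)\)), and the identity \(\tr\KG(g)=\KG(\tr g)\) likewise requires this invertibility (this is exactly the hypothesis of Lemma~\ref{lem:strong_monoidal_duals}). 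So the linchpin of your argument needs \(\KG\) to be \emph{strong} monoidal on the thick subcategory \(\langle\Unit\rangle\); this is not free and is precisely what the paper proves (Proposition~\ref{prop:symmetric spectra and trace on KKG}): both functors \(\KG(\blank)\wedge_{\KG(\Unit)}\KG(B)\) and \(\KG(\blank\otimes B)\) are exact, \(c\) is an isomorphism on suspensions of \(\Unit\), hence on all of \(\langle\Unit\rangle\) by a thick-subcategory argument resting on Proposition~\ref{prop:dualisable is compact}. Without this step your trace comparison is unjustified.

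Second, you choose \(f_C\) by (TR3) in \(\KKcat^G\) and then invoke additivity in \(\Ho(\KG(\Unit)\textrm-\Mod)\) for the resulting endomorphism \((\KG(f_A),\KG(f_B),\KG(f_C))\) of the image triangle. May's additivity theorem does not say that \emph{every} fill-in of a commuting square satisfies the trace identity; it asserts only that \emph{some} compatible fill-in does (fill-ins differ by maps factoring through the boundary, and their traces can differ). Since the theorem you must prove is itself existential, the correct order of quantifiers is the paper's: apply May's theorem downstairs to obtain a good fill-in \(g_C\) of \((\KG(f_A),\KG(f_B))\), and then lift \(g_C\) back to \(f_C\in\KK^G_0(C,C)\) with the diagram commuting upstairs. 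This lifting requires that \(\KG\) be fully faithful on \(\langle\Unit\rangle\) -- again the content of Proposition~\ref{prop:symmetric spectra and trace on KKG}, deduced from the computation \(\KK^G(\Unit[i],A)\cong\Ho(\KG(\Unit)\textrm-\Mod)(\KG(\Unit)[i],\KG(A))\) and exactness. Your injectivity remark on endomorphisms of the unit is correct but not enough: you need fullness (and faithfulness) on \(\langle\Unit\rangle\) to bring the good completing arrow back to \(\KKcat^G\). Both gaps are closed by exactly the strong-monoidality-plus-full-faithfulness statement on \(\langle\Unit\rangle\), which is the real technical heart of the paper's proof beyond the construction of \(\KG\) itself.
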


That traces should be additive in this sense is plausible in any category that is triangulated and symmetric monoidal, provided the tensor product and the triangulated category structure are compatible in a suitable sense.  The compatibility axioms needed for this were first worked out by J.~Peter May in~\cite{May:Additivity}.

Some of the axioms in~\cite{May:Additivity} are, however, amazingly complicated, and already the simplest ones, which require the category in question to be \emph{closed} symmetric monoidal, fail for~\(\KKcat^G\).  This obvious problem may be circumvented by embedding~\(\KKcat^G\) into a larger symmetric monoidal triangulated category that is closed and satisfies May's axioms.  A promising approach to construct such an embedding would use simplicial presheaves on separable \(G\)\nb-\(\Cst\)-algebras, following ideas of Paul Arne {\O}stv{\ae}r~\cite{Ostvaer:Homotopy_Cstar_book}.  Another approach would be to show that \(\KKcat^G\) is part of a derivator and that triangulated categories coming from derivators always satisfy additivity of traces.  But since both approaches require a lot of technical work, we choose a different approach.  It has the disadvantage that it only applies to the subcategory of \(\KKcat^G\) generated by the tensor unit.

Our main tool is to lift \(G\)\nb-equivariant \(\K\)\nb-theory to a symmetric monoidal functor from \(G\)\nb-\(\Cst\)-algebras to a suitable category of module spectra.  This lifting descends to a triangulated functor from \(\KKcat^G\) to the homotopy category of module spectra, which is fully faithful on the thick triangulated subcategory generated by the tensor unit.  Hence we may transport additivity of traces from the category of module spectra, where it is known, to the bootstrap category in \(\KKcat^G\).

There exist several suggestions how to lift \(\K\)\nb-theory to spectra.  First, Ulrich Bunke, Michael Joachim and Stephan Stolz~\cite{Bunke-Joachim-Stolz:Classifying_K} construct an orthogonal \(\K\)\nb-theory spectrum with homotopy groups \(\K_*(A) \cong \KK_*(\C,A)\) using unbounded Kasparov cycles.  However, their construction is only functorial for \emph{essential} \(^*\)\nb-homomorphisms, which means for our purposes that it is \emph{not} functorial.  A later construction by Michael Joachim and Stephan Stolz~\cite{Joachim-Stolz:Enrichment} based on the Cuntz picture is incorrect because the iterated Cuntz algebras are not symmetric with respect to permutations.

Therefore, we provide our own symmetric \(\K\)\nb-theory
spectrum, which is very close to the model
of~\cite{Bunke-Joachim-Stolz:Classifying_K}, but functorial for
arbitrary \(^*\)\nb-homomorphisms.  We use the description of
\(\K\)\nb-theory for graded algebras by Jody
Trout~\cite{Trout:Graded_K}, which may be traced back to the
thesis of Ulrich Haag.

\subsection*{Outline of the article}

In Section~\ref{sec:trace}, we recall some basic notation
regarding Kasparov theory and traces in symmetric monoidal
categories.  In Section~\ref{sec:K-enriched}, we lift
\(G\)\nb-equivariant \(\K\)\nb-theory to a lax monoidal
functor~\(\KG\) from~\(\KKcat^G\) to the category of symmetric
module spectra over the symmetric ring spectrum~\(\KG(\Unit)\).
Our construction works equally well for real \(\Cst\)-algebras,
with no extra costs, so we will cover both cases throughout.
In Section~\ref{sec:additivity}, we prove
Theorem~\ref{the:additivity_trace} on additivity of traces.  In
Section~\ref{sec:application}, we apply additivity of traces to
equivariant Kasparov theory and establish the Lefschetz Fixed
Point Theorem~\ref{thm:HS_for_KKG}.

\section{Traces in symmetric monoidal categories}
\label{sec:trace}

Let \(\Ccat\) be a symmetric monoidal category with tensor product~\(\otimes\), unit object~\(\Unit\), and braiding isomorphisms \(\braid_{A,B}\colon A\otimes B\to B\otimes A\).  We omit from our notation, and usually just ignore, the structural associativity and unit isomorphisms; this is justified by the Coherence Theorem (see \cite{MacLane:CategoriesII}*{Chapter~XI}).

We shall consider the following examples.

\begin{example}
  \label{exa:G-C*sep_smc}
  Let~\(G\) be a locally compact group or, more generally, a locally compact groupoid.
  Let \(\Cstarcat^G\) be the category whose objects are the separable \(G\)\nb-\(\Cst\)-algebras and whose morphisms from~\(A\) to~\(B\) are the \(G\)-equivariant \(^*\)\nb-homomorphisms.
  Given two \(G\)\nb-\(\Cst\)\nb-algebras \(A\) and~\(B\), let \(A\otimes_\mini B\) denote their spatial \(\Cst\)\nb-tensor product.

  If~\(G\) is a group, let \(A\otimes B\) be \(A\otimes_\mini B\) equipped with the diagonal action of~\(G\).
  This defines a symmetric monoidal structure on \(\Cstarcat^G\), where the unit object~\(\Unit\) is~\(\C\) (\(\R\) in the case of real \(\Cst\)-algebras) equipped with the trivial action of~\(G\).
  The braiding isomorphism is the unique extension of the obvious braiding isomorphism on the algebraic tensor product, which is dense in \(A\otimes_\mini B\); the associativity and unit isomorphisms are similarly evident maps.

  If~\(G\) is, more generally, a locally compact groupoid with object space~\(X\), then we modify the above definitions.
  Now \(A\otimes_\mini B\) is canonically a \(\Cst\)\nb-algebra over \(X\times X\).  It yields a \(\Cst\)\nb-algebra over~\(X\) by restricting to the diagonal in~\(X\times X\).
  There is a canonical diagonal action of~\(G\) on this restriction, and the resulting \(G\)\nb-\(\Cst\)-algebra is our tensor product \(A\otimes B\).
  The tensor unit~\(\Unit\) is \(\Cont_0(X)\) equipped with the canonical action of~\(G\).
  (We also use the notation \(A\otimes_X B\) to emphasise the dependence of the tensor product on~\(X\).)
  As before, we get a symmetric monoidal category~\(\Cstarcat^G\).
\end{example}

\begin{example}
  \label{exa:KK_smc}
  Let \(\KKcat^G\) be the category whose objects are the separable \(G\)\nb-\(\Cst\)-algebras and whose morphisms from~\(A\) to~\(B\) are the bivariant \(\K\)\nb-groups \(\KK^G(A,B)\), defined in~\cite{Kasparov:Novikov} for groups and in~\cite{LeGall:KK_groupoid} for groupoids.  There is a canonical functor \(\Cstarcat^G\to \KKcat^G\), which is characterised by a universal property.  Using the latter, one checks easily that the tensor product of Example~\ref{exa:G-C*sep_smc} extends along the canonical functor to a symmetric monoidal structure on~\(\KKcat^G\) with the same object function \((A,B)\mapsto A\otimes B\).  As explained in \cite{Meyer-Nest:BC_Localization}*{Appendix~A}, the category \(\KKcat^G\) is triangulated.  Moreover, \(\Cst\)-direct sums provide countable coproducts in~\(\KKcat^G\), and the tensor bifunctor \(\otimes\) preserves exact triangles and coproducts in each variable.
\end{example}

\begin{example}
  \label{exa:KK_smc_graded}
  Later we will also work with \(\Z/2\)-graded \(\Cst\)\nb-algebras.
  They may be turned into a symmetric monoidal category using the usual tensor product~\(\otimes_\mini\) as above because a \(\Z/2\)-grading is the same thing as a \(\Z/2\)-action.
  But we will instead use the (minimal) \emph{graded} \(\Cst\)\nb-tensor product~\(\grot\).
  In \(A\grot B\), the copies of \(A\) and~\(B\) \emph{graded} commute, that is, the even elements commute with the other tensor factor, and the odd elements in \(A\) and \(B\) anticommute.
  Thus \(A\grot B\) and~\(A\otimes B\) are equal if one of the \(\Cst\)\nb-algebras involved is trivially graded.  More generally, if one of the factors is \emph{evenly graded}, then there is a \(\Cst\)\nb-algebra isomorphism \(A\otimes B \cong A\grot B\).

  The tensor product~\(\grot\) is part of a symmetric monoidal structure.  The tensor unit is \(\C\) (or~\(\R\)) with trivial \(\Z/2\)-grading.  The braiding isomorphism \(\sigma : A\grot B\to B\grot A\) maps \(a\grot b\mapsto (-1)^{\abs{a}\cdot\abs{b}} b\grot a\) for homogeneous \(a\in A\), \(b\in B\).
  This yields symmetric monoidal categories both at the \(^*\)\nb-homomorphism and \(\KK\)-theory level.
  Contrary to the case of trivially graded algebras, \(\KKcat^G\) has no natural triangulation.
\end{example}

\begin{example}
  \label{exa:KK_smc_groupoid_graded}
  Finally, we combine Examples \ref{exa:KK_smc} and~\ref{exa:KK_smc_graded}, considering \(\Z/2\)\nb-graded \(\Cst\)\nb-algebras with an action of a locally compact groupoid~\(G\).  We assume that the grading and \(G\)\nb-action are compatible in the sense that we get an action of \(G\times\Z/2\).  Then the graded tensor product over~\(X\) provides a symmetric monoidal structure on \(\Z/2\)\nb-graded \(G\)\nb-\(\Cst\)\nb-algebras, as well as on the Kasparov category~\(\KKcat^G\).
\end{example}

Let us return to a general symmetric monoidal category~\(\Ccat\).

\begin{defn}
  An object~\(A\) of~\(\Ccat\) is called \emph{dualisable} if there are morphisms
  \[
  \eta\colon \Unit\rightarrow A\otimes A^*\quad\text{and}\quad
  \varepsilon\colon A^*\otimes A\rightarrow \Unit
  \]
  called \emph{unit} (or coevaluation) and \emph{counit} (or evaluation), respectively, which satisfy the \emph{zigzag equations}: each of the composites
  \begin{gather*}
    A \cong \Unit\otimes A
    \xrightarrow{\eta\otimes\id_A} (A\otimes A^*)\otimes A
    \cong A\otimes (A^*\otimes A)
    \xrightarrow{\id_A\otimes \varepsilon} A\otimes\Unit\cong A,\\
    A^* \cong A^*\otimes \Unit
    \xrightarrow{\id_{A^*}\otimes\eta} A^*\otimes (A\otimes A^*)
    \cong (A^*\otimes A)\otimes A^*
    \xrightarrow{\varepsilon\otimes \id_{A^*}} \Unit\otimes A^*
    \cong A^*
  \end{gather*}
  should be the identity.  (The unnamed isomorphisms are the canonical ones.)

  The data \((A,A^*,\eta,\varepsilon)\) is equivalent to natural isomorphisms
  \[
  \Ccat(B\otimes A, C) \cong \Ccat(B, C\otimes A^*)
  \]
  for all objects \(B\) and~\(C\) of~\(\Ccat\).  The dual~\(A^*\) is determined uniquely up to canonical isomorphism.
\end{defn}

In the context of equivariant Kasparov theory, this categorical notion of duality has been considered in~\cite{Skandalis:KK_survey} and, more recently, in~\cite{Echterhoff-Emerson-Kim:Duality}.

\begin{prop}
  \label{prop:dualisable}
  The full subcategory~\(\Ccat_d\) of dualisable objects in~\(\Ccat\)
  is closed symmetric monoidal for the tensor structure of~\(\Ccat\)
  and the internal Hom functor \(A^*\otimes B\).  The duality
  \(A\mapsto A^*\) is a symmetric monoidal equivalence \(\Ccat_d
  \simeq (\Ccat_d)^\textup{op}\).
\end{prop}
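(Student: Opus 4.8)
The strategy is to verify the required closure properties directly from the zigzag equations, treating the suppressed associativity and unit isomorphisms silently as the Coherence Theorem permits. First I would note that $\Unit$ is dualisable with $\Unit^*=\Unit$ and with $\eta=\varepsilon$ the canonical isomorphism $\Unit\cong\Unit\otimes\Unit$, the zigzag equations reducing to a triangle identity for the unitors. Next, given dualities $(A,A^*,\eta_A,\varepsilon_A)$ and $(B,B^*,\eta_B,\varepsilon_B)$, I would take $(A\otimes B)^*\defeq B^*\otimes A^*$ with unit $\Unit\xrightarrow{\eta_A}A\otimes A^*\xrightarrow{\id_A\otimes\eta_B\otimes\id_{A^*}}A\otimes B\otimes B^*\otimes A^*$ (reassociated to land in $(A\otimes B)\otimes(B^*\otimes A^*)$) and counit $(B^*\otimes A^*)\otimes(A\otimes B)\xrightarrow{\id_{B^*}\otimes\varepsilon_A\otimes\id_B}B^*\otimes B\xrightarrow{\varepsilon_B}\Unit$; each zigzag for $A\otimes B$ then collapses, by naturality and coherence, to the zigzags for $A$ and for $B$ performed in succession. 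Since $\Ccat_d$ is full, this already shows it is a symmetric monoidal subcategory of $\Ccat$, with inherited braiding and structural isomorphisms.

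To obtain the internal Hom I would first check that $A\in\Ccat_d$ implies $A^*\in\Ccat_d$: the data $\bigl(A^*,\,A,\,\braid\circ\eta_A,\,\varepsilon_A\circ\braid\bigr)$ --- consisting of a morphism $\Unit\to A^*\otimes A$ and a morphism $A\otimes A^*\to\Unit$ --- satisfies the zigzag equations, which follow from those for $A$ together with naturality of $\braid$ and $\braid^2=\id$; in particular $(A^*)^*\cong A$ canonically. Hence $[A,B]\defeq A^*\otimes B$ lies in $\Ccat_d$ by the previous paragraph, and the natural isomorphism $\Ccat(C\otimes A,B)\cong\Ccat(C,B\otimes A^*)$ from the definition of dualisability, followed by the braiding $B\otimes A^*\cong A^*\otimes B$, gives $\Ccat_d(C\otimes A,B)\cong\Ccat_d(C,[A,B])$ naturally in all variables. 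This exhibits $\Ccat_d$ as closed.

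For the last assertion I would define $D\colon\Ccat_d\to(\Ccat_d)^\textup{op}$ by $D(A)=A^*$ on objects and by sending $f\colon A\to B$ to its transpose $f^*\colon B^*\to A^*$, the composite $B^*\cong B^*\otimes\Unit\xrightarrow{\id\otimes\eta_A}B^*\otimes A\otimes A^*\xrightarrow{\id\otimes f\otimes\id}B^*\otimes B\otimes A^*\xrightarrow{\varepsilon_B\otimes\id}\Unit\otimes A^*\cong A^*$. The identities $\id_A^*=\id_{A^*}$ and $(gf)^*=f^*g^*$ are once more zigzag computations, so $D$ is a functor; the canonical isomorphisms $A^{**}\cong A$ are natural in $A$ and show that $D$ is an equivalence, with quasi-inverse the analogous functor $(\Ccat_d)^\textup{op}\to\Ccat_d$. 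Finally the isomorphisms $\Unit\cong\Unit^*$ and $(A\otimes B)^*\cong B^*\otimes A^*\cong A^*\otimes B^*$ serve as the structure maps of a monoidal functor, and one verifies by coherence that they respect the associativity, unit and braiding constraints, so $D$ is symmetric monoidal.

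I expect no conceptual obstacle here: the proposition is entirely formal and standard, so essentially all of the work is bookkeeping --- correctly threading the hidden structural isomorphisms through a handful of commuting diagrams and, in particular, checking the coherence of the monoidal structure on $D$. Accordingly I would either cite a standard reference or leave these routine diagram chases to the reader.
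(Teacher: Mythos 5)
Your argument is correct: it is the standard formal verification that the unit object is self-dual, that \((A\otimes B)^*\defeq B^*\otimes A^*\) with the nested unit/counit works, that \(A^*\) is dualisable with dual \(A\) via the braiding, that \(A^*\otimes B\) is an internal Hom by the defining adjunction, and that the transpose functor \(A\mapsto A^*\), \(f\mapsto f^*\) is a symmetric monoidal equivalence \(\Ccat_d\simeq(\Ccat_d)^\textup{op}\). The paper does not carry out this verification at all --- it simply cites Lewis--May--Steinberger, Chapter~III.1, where exactly these routine zigzag and coherence checks are performed --- so in substance your route coincides with the one the paper delegates to its reference; spelling it out is the only difference, and deferring the remaining diagram chases (naturality of \(A\cong A^{**}\), compatibility of the monoidal structure maps of the duality functor with associativity, unit and braiding) is in line with what the paper itself does. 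One trivial slip to fix: for \(\Unit\) the unit and counit are not literally the same map but the canonical isomorphism \(\Unit\to\Unit\otimes\Unit\) and its inverse, matching the paper's convention \(\eta\colon\Unit\to A\otimes A^*\), \(\varepsilon\colon A^*\otimes A\to\Unit\).
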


For the proof, see \cite{Lewis-etal:Equivariant_stable_homotopy_theory}*{Chapter~III.1}.

\begin{defn}
  \label{def:trace}
  Let~\(A\) be a dualisable object of~\(\Ccat\) and let \(f\colon A\to A\) be an endomorphism in~\(\Ccat\).  The \emph{trace} of~\(f\), denoted \(\tr(f)\colon \Unit\to \Unit\), is the following composition:
  \[
  \xymatrix@R=0pt@C+2em{
    &
    &
    A\otimes A^* \ar[dr]^{\braid} &
    &
    \\
    \Unit \ar[r]^-{\eta} &
    A\otimes A^* \ar[ru]^{f\otimes \id_{A^*}} \ar[rd]_{\braid} &
    &
    A^*\otimes A \ar[r]^-{\varepsilon}  &
    \Unit.
    \\
    &
    &A^*\otimes A \ar[ru]_{\; \id_{A^*}\otimes f} &
    &
  }
  \]
  The \emph{Euler characteristic} of~\(A\) is \(\chi(A) \defeq \tr(\id_A)\).
\end{defn}

\subsection{Lax monoidal functors}

Let \(\Ccat=(\Ccat, \otimes_{\Ccat}, \Unit_{\Ccat})\) and~\(\Dcat =(\Dcat, \otimes_{\Dcat}, \Unit_{\Dcat})\) be two symmetric monoidal categories.  A \emph{\textup(lax\textup) symmetric monoidal functor}~\(F\) from~\(\Ccat\) to~\(\Dcat\) is a functor \(F \colon \Ccat \to\Dcat\) together with a morphism
\[
i\colon \Unit_{\Dcat}\to F(\Unit_{\Ccat})
\]
and a natural transformation
\[
c= c_{A,B}\colon F(A) \otimes_{\Dcat} F(B) \to F(A\otimes_{\Ccat} B)
\quad\quad (A,B\in \Ccat),
\]
which are compatible with the associativity, unit and braiding isomorphisms in \(\Ccat\) and~\(\Dcat\) in a suitable sense (see~\cite{MacLane:CategoriesII}*{Chapter~XI.2} or \cite{Lewis-etal:Equivariant_stable_homotopy_theory}*{p.~126}).
We call~\(F\) (or rather, \((F,c,i)\)) \emph{normal} if~\(i\) is an isomorphism, and \emph{strong} if both \(c\) and~\(i\) are isomorphisms.

\begin{lem}
  \label{lem:strong_monoidal_duals}
  Let \((F,c,i)\colon \Ccat\to \Dcat\) be a normal symmetric monoidal functor, let \(A\in \Ccat\) be dualisable with dual~\(A^*\), and let \(c\colon F(A)\otimes F(A^*)\to F(A\otimes A^*)\) be invertible \textup{(}this happens, in particular, if~\(F\) is strong\textup{)}.
   Then \(F(A)\) is dualisable with dual \(F(A^*)\), and \(\tr F(f) = i^{-1}\circ F(\tr f)\circ i \in \Dcat(\Unit_{\Dcat},\Unit_{\Dcat})\) for all \(f\in \Ccat(A,A)\).
\end{lem}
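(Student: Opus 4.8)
The plan is to write down explicit coevaluation and evaluation morphisms for $F(A)$ built from the data for $A$ and the structure maps $c,i$, to verify the two zigzag equations for this candidate duality by a diagram chase, and finally to read off the trace.

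I begin by setting
$\eta' \defeq c_{A,A^*}^{-1}\circ F(\eta)\circ i\colon \Unit_\Dcat\to F(A)\otimes F(A^*)$
and
$\varepsilon' \defeq i^{-1}\circ F(\varepsilon)\circ c_{A^*,A}\colon F(A^*)\otimes F(A)\to\Unit_\Dcat$,
where $\eta\colon\Unit_\Ccat\to A\otimes A^*$ and $\varepsilon\colon A^*\otimes A\to\Unit_\Ccat$ are the duality morphisms for~$A$. These make sense: $i$ is invertible because $F$ is normal, and $c_{A^*,A}$ is invertible too, being the conjugate of $c_{A,A^*}$ by the braidings of $\Ccat$ and $\Dcat$ (via the compatibility of $c$ with braidings). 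I would also record the elementary fact that the unit constraints $c_{\Unit_\Ccat,A}$ and $c_{A,\Unit_\Ccat}$ are automatically invertible, since composing them with $i\otimes\id$ (resp.\ $\id\otimes i$) produces a composite of the unit isomorphisms of $\Ccat$ and $\Dcat$.

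Next I would check the first zigzag equation for $(F(A),F(A^*),\eta',\varepsilon')$; by MacLane's coherence theorem one may pretend all monoidal structures are strict. Expanding the zigzag composite and grouping it as $(\mathrm{III})\circ(\mathrm{II})\circ(\mathrm{I})$ — with $(\mathrm{I})$ collecting $i\otimes\id$ and $F(\eta)\otimes\id$, with $(\mathrm{II})$ the middle pair $c_{A,A^*}^{-1}\otimes\id$ and $\id\otimes c_{A^*,A}$, and with $(\mathrm{III})$ collecting $\id\otimes F(\varepsilon)$ and $\id\otimes i^{-1}$ — I would argue as follows. Naturality of $c$ at $\varepsilon$ (usable since $c_{A,\Unit_\Ccat}$ is invertible) together with the unit axiom for $A\otimes\Unit_\Ccat$ rewrites $(\mathrm{III})$ as $F(\id_A\otimes\varepsilon)\circ c_{A,A^*\otimes A}$, introducing the structure map $c_{A,A^*\otimes A}$ itself and not its inverse; this $c_{A,A^*\otimes A}$ then meets $(\mathrm{II})\circ(\mathrm{I})$, and the associativity axiom for $c$ (applied to $(\mathrm{II})$) together with naturality of $c$ at $\eta$ and the unit axiom $c_{\Unit_\Ccat,A}\circ(i\otimes\id)=\id$ (applied to $(\mathrm{I})$) give $c_{A,A^*\otimes A}\circ(\mathrm{II})\circ(\mathrm{I})=F(\eta\otimes\id_A)$. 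Hence the whole composite equals $F\bigl((\id_A\otimes\varepsilon)\circ(\eta\otimes\id_A)\bigr)=F(\id_A)=\id_{F(A)}$ by the first zigzag for~$A$; the second zigzag follows by the mirror-image argument. This is the technical heart of the lemma: $c$ is invertible only at $(A,A^*)$ — hence, via the braiding, at $(A^*,A)$, and, via the unit axioms, at the pairs containing $\Unit_\Ccat$ — so the chase must be organised so that no other instance of $c$ is ever inverted, which is exactly what the grouping above arranges.

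Finally I would compute the trace. By Definition~\ref{def:trace}, $\tr F(f)=\varepsilon'\circ\braid_{F(A),F(A^*)}\circ\bigl(F(f)\otimes\id_{F(A^*)}\bigr)\circ\eta'$. Substituting the formulas for $\eta'$ and $\varepsilon'$, the compatibility of $c$ with braidings rewrites $c_{A^*,A}\circ\braid_{F(A),F(A^*)}$ as $F(\braid_{A,A^*})\circ c_{A,A^*}$, after which naturality of $c$ at $f$ rewrites $c_{A,A^*}\circ\bigl(F(f)\otimes\id_{F(A^*)}\bigr)\circ c_{A,A^*}^{-1}$ as $F\bigl(f\otimes\id_{A^*}\bigr)$. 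Collecting the remaining applications of $F$ into one identifies the middle of the composite with $F\bigl(\varepsilon\circ\braid_{A,A^*}\circ(f\otimes\id_{A^*})\circ\eta\bigr)=F(\tr f)$, so that $\tr F(f)=i^{-1}\circ F(\tr f)\circ i$. (In the non-strict setting the unnamed coherence isomorphisms of Definition~\ref{def:trace} are carried along and matched using the coherence of $(F,c,i)$; this is routine.)
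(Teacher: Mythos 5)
Your proposal is correct and follows essentially the same route as the paper: it takes the same candidate unit \(c^{-1}\circ F(\eta)\circ i\) and counit \(i^{-1}\circ F(\varepsilon)\circ c\), the paper merely delegating the zigzag and trace verifications to coherence and to \cite{Lewis-etal:Equivariant_stable_homotopy_theory}*{Chapter~III, Proposition~1.9}, while you carry out that diagram chase explicitly (correctly noting that only \(c_{A,A^*}\), its braided companion \(c_{A^*,A}\), and the unit-paired instances of \(c\) ever need to be inverted). No gaps.
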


\begin{proof}
  Let \(\eta\) and~\(\varepsilon\) be the unit and counit for the duality between \(A\) and~\(A^*\).  Then
  \[
  c^{-1}\circ F(\eta)\circ i\colon
  \Unit_\Dcat \to F(A)\otimes F(A^*)
  \quad\text{and}\quad
  i^{-1}\circ F(\varepsilon)\circ c\colon
  F(A^*) \otimes F(A) \to \Unit_\Dcat
  \]
  are unit and counit for \(F(A)\) and~\(F(A^*)\).  The coherence assumptions for symmetric monoidal functors ensure that the zigzag equations carry over and that the traces in \(\Ccat\) and~\(\Dcat\) agree as asserted
   (see \cite{Lewis-etal:Equivariant_stable_homotopy_theory}*{Chapter~III, Proposition~1.9}.)
\end{proof}

\section{Equivariant K-theory as a functor to symmetric spectra}
\label{sec:K-enriched}

Let~\(G\) be a proper, locally compact groupoid with Haar system.  For (\(\Z/2\)-graded) \(G\)\nb-\(\Cst\)\nb-algebras~\(A\), we construct symmetric spectra \(\KG(A)\) with natural isomorphisms
\[
\pi_n\bigl(\KG(A)\bigr) \cong \KK^G_n(\Unit,A)
\qquad\text{for \(n\in \Z\).}
\]
If the orbit space of~\(G\) is compact, then \(\KK^G_n(\Unit,A)\) is
isomorphic to the \(\K\)\nb-theory of the crossed product
\(\Cst\)\nb-algebra \(G\ltimes A\), but not in general.  In
particular, if~\(G\) is a compact group, we have natural isomorphisms
 \[
\pi_n\bigl(\KG(A)\bigr) \cong \KK^G_n(\Unit,A) \cong \K_n(G\ltimes A) \cong \K^G_n(A).
\]

Our construction is close to the one
of~\cite{Bunke-Joachim-Stolz:Classifying_K}, but has the additional
feature that it is functorial for \(G\)-equivariant
\(^*\)\nb-homomorphisms.  Even more, \(\KG(\Unit)\) is a ring
spectrum, \(\KG(A)\) a module spectrum over \(\KG(\Unit)\) for
all~\(A\), and~\(\KG\) comes with the structure of a lax symmetric
monoidal functor between the symmetric monoidal categories of
\(G\)\nb-\(\Cst\)-algebras and \(\KG(\Unit)\)-modules.

\subsection{A suitable description of K-theory}

First we recall the description of \(\K\)\nb-theory in~\cite{Trout:Graded_K}.  This definition uses \(\Z/2\)\nb-graded \(\Cst\)\nb-algebras in a crucial way and, as a benefit, also works for \(\Z/2\)\nb-graded \(\Cst\)\nb-algebras as coefficients.  We note that~\cite{Trout:Graded_K} only considers complex \(\Cst\)\nb-algebras, but all results also hold for real \(\Cst\)\nb-algebras with no additional effort.

Throughout this section, we use the \emph{graded} minimal \(\Cst\)\nb-tensor product~\(\grot\) defined in Example~\ref{exa:KK_smc_graded}.

Let~\(\hats\) be the \(\Cst\)\nb-algebra \(\Cont_0(\R)\) with the \(\Z/2\)\nb-grading by reflection at the origin, so that the even and odd parts of~\(\hats\) consist of the even and odd functions, respectively.

Let~\(\grok\) be the \(\Cst\)\nb-algebra of compact operators on \(\Hilb \defeq \ell^2(\N\times \Z/2)\) with the even grading corresponding to the decomposition \(\Hilb \cong \ell^2(\N)\otimes \delta_0 \oplus \ell^2(\N)\otimes \delta_1\).

For two \(\Z/2\)\nb-graded \(\Cst\)\nb-algebras \(A\) and~\(B\), let \(\Hom(A,B)\) denote the pointed topological space of grading-preserving \(^*\)\nb-homomorphisms from~\(A\) to~\(B\) with the compact-open topology and the zero map as base point.

A pointed continuous map from a pointed compact space~\(X\) to
\(\Hom(A,B)\) is equivalent to an element of \(\Hom(A,\Cont_0(X,B))\),
where \(\Cont_0(X,B)\) denotes the \(\Cst\)\nb-algebra of continuous
functions \(X\to B\) vanishing at the base point of~\(X\)
(see~\cite{Joachim-Johnson:Realizing_KK}*{Proposition 3.4}).  Thus
\(\pi_n(\Hom(A,B))\) is isomorphic to the group of homotopy classes of
grading-preserving \(^*\)\nb-homomorphisms \(A\to \Cont_0(\R^n)\otimes
B\).

It is shown in \cite{Trout:Graded_K}*{Theorem 4.7} that there is a natural bijection
\begin{equation}
  \label{eq:Trout_K_iso}
  \pi_0\bigl(\Hom(\hats, A\grot\grok)\bigr) \cong \KK_0(\C,A) \cong \K_0(A)
\end{equation}
for any graded \(\sigma\)\nb-unital \(\Cst\)\nb-algebra~\(A\).  We need the following generalisation:

\begin{prop}
  \label{pro:K_via_hats}
  Let~\(G\) be a proper locally compact groupoid with Haar system and let~\(A\) be a \(\Z/2\)\nb-graded separable \(G\)\nb-\(\Cst\)\nb-algebra.  Let~\(G^0\) denote the object space of~\(G\) and let \(\Unit\defeq \Cont_0(G^0)\) with the canonical \(G\)\nb-action.  Then there is an isomorphism
  \[
  \pi_n \bigl(\Hom_G(\Unit \grot \hats, A\grot\grok_G)\bigr) \cong
  \KK_n^G(\Unit,A).
  \]
  for every \(n\ge0\), which is natural with respect to grading-preserving \(G\)-equivariant \(^*\)\nb-homomorphisms.
\end{prop}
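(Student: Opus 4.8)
The plan is to deduce this from Trout's isomorphism~\eqref{eq:Trout_K_iso} by a crossed-product argument, exploiting that for a proper groupoid with Haar system the equivariant Kasparov groups $\KK^G_*(\Unit,A)$ can be computed non-equivariantly after applying a suitable averaging/descent construction. First I would reduce to the case $n=0$: since $\Cont_0(\R^n)$ is trivially graded, Proposition~\ref{pro:K_via_hats} for general $n$ follows from the case $n=0$ applied to the $G$-$\Cst$-algebra $\Cont_0(\R^n)\grot A = \Cont_0(\R^n)\otimes A$ (with trivial $G$-action on the $\R^n$ factor), using $\KK^G_n(\Unit,A)\cong\KK^G_0(\Unit,\Cont_0(\R^n)\otimes A)$ and the identification of $\pi_n(\Hom_G(-,-))$ with homotopy classes of maps into $\Cont_0(\R^n)\otimes(-)$ exactly as in the discussion preceding the statement.

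For $n=0$, the key step is to show that the pointed space $\Hom_G(\Unit\grot\hats, A\grot\grok_G)$ of $G$-equivariant grading-preserving $^*$-homomorphisms has $\pi_0$ naturally isomorphic to $\KK^G_0(\Unit,A)$. Here $\grok_G$ should be the $G$-equivariant analogue of $\grok$, i.e.\ graded compact operators on a $G$-Hilbert module that is sufficiently large (absorbing), chosen so that $A\grot\grok_G$ is the ``stabilised'' model in which equivariant Kasparov cycles are represented by honest $^*$-homomorphisms. The strategy: (i) a grading-preserving $G$-equivariant $^*$-homomorphism $\Unit\grot\hats\to A\grot\grok_G$ is, by Trout's picture adapted equivariantly, the same as a class in $\KK^G_0(\Unit,A)$; (ii) homotopy of such homomorphisms corresponds to the homotopy relation defining $\KK^G$, using that $\Cont_0([0,1])\otimes(A\grot\grok_G)$ is again of the required form; (iii) the group structure on $\pi_0$ coming from a co-$H$-space structure on $\hats$ (or from direct sums via the stabilisation) matches the Kasparov sum. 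Steps (i)--(iii) are the equivariant upgrades of the arguments in~\cite{Trout:Graded_K} and~\cite{Bunke-Joachim-Stolz:Classifying_K}, and the properness of~$G$ together with the existence of a Haar system is what makes the equivariant stabilisation theorem (a Kasparov-type absorption result over~$G^0$) available, so that every $\KK^G$-cycle is represented by a $^*$-homomorphism and every homotopy of cycles by a homotopy of $^*$-homomorphisms.

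The main obstacle will be step~(i)--(ii): proving that the na\"ive equivariant $\pi_0$ of the mapping space really computes $\KK^G_0$, rather than some subquotient, which requires an equivariant version of the Stabilisation/absorption theorem valid for proper groupoids with Haar system, plus care that the grading conventions (the graded tensor product~$\grot$, the grading on $\hats$ and on $\grok_G$) interact correctly with the $G$-action as in Example~\ref{exa:KK_smc_groupoid_graded}. Once these are in place, naturality in $A$ with respect to grading-preserving $G$-equivariant $^*$-homomorphisms is immediate, since every construction used (composition of $^*$-homomorphisms, the functors $\Cont_0(\R^n)\otimes(-)$ and $(-)\grot\grok_G$) is manifestly functorial. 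I would close by remarking that when $G$ is a compact group this recovers the expected $\Hom_G(\hats,A\grot\grok_G)$ with $\grok_G$ the compacts on $L^2(G)\otimes\Hilb$, and that the construction of~$\KG(A)$ in the rest of the section is then obtained by letting $A$ vary over the spheres $\Cont_0(\R^k)\grot(-)$ and assembling the resulting spaces into a symmetric spectrum.
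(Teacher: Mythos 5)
There is a genuine gap: your steps (i)--(ii) are not a proof strategy but a restatement of the proposition itself, and you explicitly defer the hard part (``the main obstacle will be step~(i)--(ii)'') to an unproved equivariant version of Trout's theorem, based on an equivariant stabilisation/absorption theorem for proper groupoids that you do not formulate or establish. The paper never needs such an equivariant upgrade of Trout's argument. Its proof rests on a simple observation you do not use: since the $G$\nb-action on $\hats$ is trivial (all of the action on $\Unit\grot\hats$ sits in the $\Unit$ factor), $G$\nb-equivariant homomorphisms out of $\hats$ are just homomorphisms into the fixed-point algebra, and for a compact group
\[
\Hom_G(\hats, A\grot\grok\otimes\Comp(L^2G))
= \Hom\bigl(\hats, (A\grot\grok\otimes\Comp(L^2G))^G\bigr)
\cong \Hom\bigl(\hats,(A\grot\grok)\rtimes G\bigr),
\]
using $(B\otimes\Comp(L^2G))^G\cong B\rtimes G$ (Guentner--Higson--Trout, Chapter~11). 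This reduces the statement to the \emph{non-equivariant} isomorphism~\eqref{eq:Trout_K_iso} applied to the crossed product, followed by the Green--Julg identification $\K_n(A\rtimes G)\cong\KK^G_n(\Unit,A)$. Your opening sentence gestures at ``a crossed-product argument'' and ``averaging/descent,'' but your actual plan then abandons the crossed product and instead proposes to redo Trout's proof equivariantly, which is precisely the work the fixed-point trick avoids.

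A second, related omission concerns the general groupoid case. When the orbit space $G^0/G$ is not compact, homotopy classes of equivariant homomorphisms do \emph{not} compute $\K_*(A\rtimes G)$; one obtains instead $\mathcal{R}\KK_n(G^0/G;\Cont_0(G^0/G),A\rtimes G)$, and the identification of this group with $\KK^G_n(\Unit,A)$ is a nontrivial theorem of Tu (\cite{Tu:Novikov}*{Proposition 6.25}), which plays the role of the Green--Julg theorem here. Your sketch never addresses this distinction, and without invoking Tu's result (or proving a substitute) the final identification with $\KK^G_n(\Unit,A)$ for a general proper groupoid is missing. Your reduction from general $n$ to $n=0$ via $\Cont_0(\R^n)\otimes A$ and your naturality remarks are fine, but the core of the proposition -- why the $\pi_0$ of this particular mapping space is $\KK^G_0(\Unit,A)$ -- remains unestablished in your proposal.
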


Here \(\Hom_G(A,B)\subseteq \Hom(A,B)\) denotes the subspace of \(G\)\nb-equivariant maps in \(\Hom(A,B)\) and
\[
\grok_G \defeq \Comp(L^2(G\times \N\times\Z/2)) \cong \Comp(L^2G)\grot\grok.
\]
We have \(\Unit\grot \hats \cong \Unit \otimes \hats \cong \Cont_0(G^0,\hats)\), and there is a \(G\)\nb-equivariant \(\Cst\)\nb-algebra isomorphism \(A\grot \grok_G \cong A\otimes \grok_G\) because~\(\grok_G\) is evenly graded.

\begin{proof}
  The argument in~\cite{Trout:Graded_K} carries over to this situation
  with minor modifications.  If~\(G\) is a compact group,
  then~\(\Unit\) is~\(\C\) or~\(\R\) with trivial \(G\)\nb-action, and
  \begin{multline*}
    \Hom_G(\Unit \grot \hats, A\grot\grok_G)
    = \Hom_G(\hats, A\grot\grok \otimes \Comp(L^2G))
    \\= \Hom\bigl(\hats, (A\grot\grok \otimes \Comp(L^2G))^G\bigr)
    \cong \Hom\bigl(\hats, (A\grot\grok) \rtimes G\bigr),
  \end{multline*}
  where \(B^G\subseteq B\) denotes the \(\Cst\)\nb-subalgebra of \(G\)\nb-invariant elements.  A convenient reference for the isomorphism \((B\otimes\Comp(L^2G))^G \cong B\rtimes G\) is \cite{Guentner-Higson-Trout:Equivariant_E}*{Chapter~11}.  Hence~\eqref{eq:Trout_K_iso} yields
  \begin{multline*}
    \pi_n\bigl(\Hom_G\bigl(\Unit \grot \hats, A\grot\grok_G\bigr)\bigr)
    \cong \pi_0\bigl(\Hom\bigl(\hats, \Cont_0(\R^n) \otimes (A\rtimes G)\grot\grok\bigr)\bigr)
    \\\cong \K_0(\Cont_0(\R^n)\otimes A\rtimes G) \cong \K_n^G(A) \cong \KK_n^G(\C,A).
  \end{multline*}
  Now let~\(G\) be a proper locally compact groupoid with Haar system instead.  The reasoning above carries over almost literally to provide an isomorphism
  \[
  \pi_n\bigl(\Hom_G(\Unit \grot \hats, A\grot\grok_G)\bigr) \cong
  \K_n(A\rtimes G)
  \]
  if~\(G\) is cocompact.  In general, we get the group \(\mathcal{R}\KK_n(G^0/G; \Cont_0(G^0/G), A\rtimes G)\).  In both cases, the result is isomorphic to \(\KK_n^G(\Unit,A)\) by \cite{Tu:Novikov}*{Proposition 6.25}.
\end{proof}

\begin{rem}
  \label{rem:regular}
  We compare our model of Kasparov theory to the one used by Ulrich
  Bunke, Michael Joachim and Stephan
  Scholz~\cite{Bunke-Joachim-Stolz:Classifying_K} for trivial~\(G\).
  They use regular, odd, self-adjoint, unbounded operators~\(D\)
  on \(\Hilb_A\defeq A\otimes \ell^2(\N\times \Z/2)\) that satisfy
  \((1+D^2)^{-1}\in\Comp(\Hilb_A)\).  The functional calculus for such
  an operator is an \emph{essential} grading-preserving
  \(^*\)\nb-homomorphisms from~\(\hats\) to \(A\grot\grok\) and,
  conversely, any such essential homomorphism is of this form for a
  unique~\(D\) as above.  Non-essential grading-preserving
  \(^*\)\nb-homomorphisms from~\(\hats\) to \(A\grot\grok\) correspond
  to unbounded operators on certain hereditary subalgebras of
  \(A\grot\grok\) (see \cite{Trout:Graded_K}*{\S3}).

  Thus the space used in~\cite{Bunke-Joachim-Stolz:Classifying_K} to
  model \(\K\)\nb-theory is homeomorphic to the space of essential,
  grading-preserving \(^*\)\nb-homomorphisms \(\hats\to A\grot\grok\).
  The Kasparov Stabilisation Theorem and standard homotopies of
  isometries show that the restriction to essential homomorphisms does
  not change the homotopy type (see \cites{Meyer:Equivariant,
    Trout:Graded_K}).

  The space of essential morphisms is only functorial for essential
  homomorphisms \(A\to A'\).  The better functoriality of
  \(\Hom(\hats,A\grot\grok)\) is crucial for our purposes.
\end{rem}

Another related reference is~\cite{Haag:Graded}, where the Cuntz picture of Kasparov theory is carried over to the \(\Z/2\)\nb-graded case.  It is shown in~\cite{Haag:Graded}  (for the complex case) that
\[
\KK_0 (A,B) \cong \pi_0 \Hom(\chi A,B\grot\grok)
\]
for a certain \(\Cst\)\nb-algebra~\(\chi A\).
Furthermore, \(\chi \C\) is isomorphic as a graded \(\Cst\)\nb-algebra to \(\widehat{\Mat}_2(\hats)\).  Since the additional stabilisation does not matter, this provides another proof of the isomorphism \(\KK_0(\C,B) \cong \pi_0 \Hom(\hats,B\grot\grok)\).

\subsection{The coalgebra structure\texorpdfstring{ of~\(\hats\)}{}}

It is crucial for our purposes that~\(\hats\) is a counital,
cocommutative, coassociative coalgebra object in the category of
\(\Z/2\)\nb-graded \(\Cst\)\nb-algebras (we work non-equivariantly in
this subsection).  That is, we have a comultiplication \(\Delta\colon
\hats\to\hats\grot\hats\) and a counit \(\epsilon\colon
\hats\to\Unit\) that satisfy the equations
\[
(\Delta\grot\id_\hats)\circ \Delta =
(\id_\hats\grot\Delta)\circ \Delta,\qquad
(\id_\hats\grot\epsilon)\circ \Delta = \id_\hats,\qquad
\braid\circ\Delta = \Delta.
\]
This additional structure is mentioned without proof in~\cite{Higson-Kasparov:Operator_K} in connection with the definition of equivariant E\nb-theory.

In order to understand this structure, we need that essential grading-preserving \(^*\)\nb-homomorphisms \(\hats\to A\) correspond bijectively to odd, self-adjoint, regular unbounded multipliers~\(D\) of~\(A\) with \((1+D^2)^{-1}\in A\) by the functional calculus for regular unbounded self-adjoint multipliers of \(\Cst\)\nb-algebras (\cite{Trout:Graded_K}*{\S3}).

The identity map on~\(\hats\) corresponds to the identical function~\(X\) on~\(\R\), viewed as an unbounded multiplier on~\(\hats\).  In the graded tensor product \(\hats\grot\hats\), the elements \(X\grot1\) and \(1\grot X\) anticommute, so that
\[
(X\grot1+1\grot X)^2 = X^2\grot1+1\grot X^2.
\]
Hence \(X\grot1+1\grot X\) induces an essential grading-preserving \(^*\)\nb-homomorphism
\[
\Delta\colon \hats\to\hats\grot\hats,\qquad
X\mapsto X\grot1+1\grot X.
\]
(The analogous map without gradings is the comultiplication
\[
\Delta\colon \Cont_0(\R) \to \Cont_\textup b(\R\times\R),\qquad
\Delta f(x,y) = f(x+y),
\]
which induces no element in \(\KK_0(\Cont_0\R,\Cont_0\R\otimes\Cont_0\R)\).)

The counit \(\epsilon\colon \hats\to\Unit\) is induced by \(0\in\Unit\).

The equality \((\Delta\grot\id_\hats)\circ\Delta =
(\id_\hats\grot\Delta)\circ\Delta\) holds because both sides are
induced by the unbounded multiplier \(X\grot 1\grot 1 + 1\grot X\grot
1 + 1\grot 1\grot X\).  We have \((\id_\hats\grot\epsilon)\circ\Delta
= \id_\hats\) because \((\id_\hats\grot \epsilon)(X\grot 1+1\grot X) =
X\cdot \epsilon(1) + 1\cdot \epsilon(X) = X\), and
\(\braid\circ\Delta=\Delta\) because \(X\grot 1+1\grot X\) is
symmetric with respect to~\(\braid\).

\subsection{The symmetric module spectra \texorpdfstring{\(\KG(A)\)}{KG(A)}}
\label{subsec:symspcKG}

Our references for symmetric spectra are
\cites{Schwede:Untitled_symmetric, Schwede:Homotopy_symmetric}.  We
will work with the category~\(\Sym\) of symmetric spectra based on
compactly generated spaces; accordingly, all relevant spaces and
constructions --~such as limits and colimits~-- take place in the
category of (pointed) compactly generated spaces (see
\cite{Hovey:Model_cats}*{\S2.4}).  The convenient definitions in
\cite{Schwede:Untitled_symmetric}*{Definitions I.1.1--4} involve a
minimal collection of data for symmetric spectra, symmetric ring
spectra, and modules over a symmetric spectrum.

We are going to construct a symmetric spectrum~\(\KG(A)\) for any
\(\Z/2\)\nb-graded \(G\)\nb-\(\Cst\)-algebra~\(A\).  Even more,
\(\KG(\Unit)\) will be a commutative symmetric ring spectrum
and~\(\KG(A)\) a symmetric module spectrum over~\(\KG(\Unit)\), and
altogether our symmetric spectra will form a symmetric lax monoidal
functor from the category of graded \(G\)\nb-\(\Cst\)-algebras to the
category of \(\KG(\Unit)\)\nb-modules.  To get all this
structure, we need:
\begin{itemize}
\item pointed spaces \(\KG_n(A)\) with pointed continuous \(\Sigma_n\)\nb-actions, where~\(\Sigma_n\) denotes the symmetric group, for \(n\ge0\) and separable \(G\)\nb-\(\Cst\)\nb-algebras~\(A\);
\item \(\Sigma_n\)\nb-equivariant, pointed, continuous maps \(\KG_n(f)\colon \KG_n(A)\to\KG_n(B)\) for \(G\)\nb-equivariant \(^*\)\nb-homomorphisms \(A\to B\);
\item \(\Sigma_n\times\Sigma_m\)-equivariant pointed maps
  \[
  c_{n,m}\colon \KG_n(A_1) \wedge \KG_m(A_2) \to \KG_{n+m}(A_1\grot A_2)
  \]
  for \(n,m\ge0\) and separable \(G\)\nb-\(\Cst\)\nb-algebras \(A_1\) and~\(A_2\);
\item unit maps \(\iota_0\colon \Sphere^0\to\KG_0(\Unit)\) (that is,
  \(\iota_0\in \KG_0(\Unit)\)) and \(\iota_1\colon
  \Sphere^1\to\KG_1(\Unit)\), where~\(\Sphere^n\) denotes the pointed
  \(n\)\nb-sphere.
\end{itemize}
First, we construct this data and check the relevant properties only in the case where~\(G\) is a compact \emph{group}.  The groupoid case is essentially the same, but notationally more complicated.

We will identify \(\R^m \cong \Sphere^m\setminus\{*\}\) where~\(*\) is
the base point.  Our construction uses the Clifford
algebra~\(\Cliff_\R\) of~\(\R\), which is the unital \(\Z/2\)\nb-graded
\(\Cst\)\nb-algebra with basis \(1,F\) where~\(F\) is an odd,
self-adjoint, involution (\(F^2=1\)).  This \(\Cst\)\nb-algebra plays
the role of a formal desuspension for (real or complex)
\(\K\)\nb-theory: there is an invertible element in
\(\KK_0\bigl(\Unit,\Cont_0(\R,\Cliff_\R)\bigr)\) (see~\cite{Kasparov:Operator_K}).

For a separable \(\Z/2\)\nb-graded \(G\)\nb-\(\Cst\)\nb-algebra~\(A\) and \(n\ge0\), let
\begin{equation}
  \label{equa:symmspace}
  \KG_n(A) \defeq \Hom_G\bigl(\hats, A\grot (\Cliff_\R\grot\grok_G)^{\grot n}\bigr),
\end{equation}
where~\(B^{\grot n}\) denotes the \(\Z/2\)\nb-graded tensor product of
\(n\)~copies of~\(B\).  This is a pointed, compactly generated
topological space.  Let~\(\Sigma_n\) act trivially on~\(\hats\)
and~\(A\), and on \((\Cliff_\R\grot\grok_G)^{\grot n}\) by the
permutation action from the braiding of~\(\grot\) (this involves signs
according to the Koszul sign rule, see
Example~\ref{exa:KK_smc_graded}).

\begin{rem}
  If~\(A\) is evenly graded, then \(\Cliff_\R\grot A\) is isomorphic as a \(\Z/2\)\nb-graded \(\Cst\)\nb-algebra to \(\Cliff_\R\otimes A\) with the grading coming only from~\(\Cliff_\R\).  Hence we may replace \(\Cliff_\R\grot\grok_G\) above by \(\Cliff_\R\otimes\Comp(\ell^2(\N\times G))\).
\end{rem}

A grading-preserving \(G\)\nb-equivariant \(^*\)\nb-homomorphism \(f\colon A_1\to A_2\) induces pointed, continuous \(\Sigma_n\)\nb-equivariant maps
\begin{equation}
  \label{eq:KG_functorial}
  \KG_n(f) \colon \KG_n(A_1) \to \KG_n(A_2),\qquad
  \alpha\mapsto (f\grot\id)\circ \alpha.
\end{equation}

Let \(A_1\) and~\(A_2\) be \(\Z/2\)\nb-graded separable \(G\)\nb-\(\Cst\)-algebras and let \(\alpha_1\in\KG_n(A_1)\), \(\alpha_2\in\KG_m(A_2)\).  We get a grading-preserving \(G\)\nb-equivariant \(^*\)\nb-homomorphism
\[
\alpha_1\grot\alpha_2\colon \hats\grot\hats \to A_1\grot(\Cliff_\R\grot\grok_G)^{\grot n} \grot A_2\grot (\Cliff_\R\grot\grok_G)^{\grot m}.
\]
Let~\(\pi_{n,m}\) be the braiding isomorphism that reorders the
tensor factors in the target \(\Cst\)\nb-algebra to \(A_1\grot
A_2\grot (\Cliff_\R\grot\grok_G)^{\grot n+m}\), without
changing the order among the factors \(\Cliff_\R\grot\grok_G\).
We define a \(\Sigma_n\times\Sigma_m\)-equivariant pointed
continuous map
\begin{multline}
  \label{eq:KG_product}
  c_{n,m}^{A_1,A_2}\colon \KG_n(A_1) \wedge \KG_m(A_2) \to
  \KG_{n+m}(A_1\grot A_2),
  \\ \alpha_1\wedge\alpha_2\mapsto \pi_{n,m}\circ (\alpha_1\grot\alpha_2)\circ\Delta\colon
\hats \to A_1\grot A_2\grot (\Cliff_\R\grot\grok_G)^{\grot n+m}.
\end{multline}
In particular, for \(A_1=A_2=\Unit\), this yields the maps
\[
c_{n,m}\colon \KG_n(\Unit) \wedge \KG_m(\Unit) \to \KG_{n+m}(\Unit\grot\Unit) \cong \KG_{n+m}(\Unit)
\]
that are needed for the structure of a symmetric ring spectrum, and
for \(A_1=A\) and \(A_2=\Unit\), this yields the maps
\[
c_{n,m}\colon \KG_n(A) \wedge \KG_m(\Unit) \to \KG_{n+m}(A\grot\Unit) \cong \KG_{n+m}(A)
\]
that are needed for the structure of a \(\KG(\Unit)\)-module.

The unit \(\iota_0\in \KG_0(\Unit) \cong \Hom(\hats,\Unit)\) is the counit~\(\epsilon\) of~\(\hats\).

The other unit~\(\iota_1\) requires two ingredients.  The function
\(t\mapsto t\cdot F\) defines an unbounded, regular, self-adjoint, odd
multiplier~\(D\) of \(\Cont_0(\R,\Cliff_\R)\) with \((1+D^2)^{-1}\colon
t\mapsto (1+ t^2)^{-1}\in \Cont_0(\R,\Cliff_\R)\).  Hence the
functional calculus for~\(D\) is a \(^*\)\nb-homomorphism
\[
\beta\colon \hats \to \Cont_0(\R,\Cliff_\R).
\]
This describes an invertible element in
\(\KK(\Unit,\Cont_0(\R,\Cliff_\R))\), see~\cite{Kasparov:Operator_K}.

Moreover, let \(\gamma\colon \Unit\to\grok_G\) be the
\(^*\)\nb-homomorphism that maps the unit element in \(\R\) or~\(\C\)
to the rank-one-projection onto the subspace spanned by the
\(G\)\nb-invariant vector \(1_G\otimes\delta_0\).  Let
\[
\iota_1\defeq \beta\grot\gamma\colon \hats \cong \hats\grot\Unit \to
\Cont_0(\R,\Cliff_\R)\grot\grok_G.
\]
This is an element of
\[
\Hom_G\bigl(\hats,\Cont_0(\Sphere^1,\Cliff_\R\grot\grok_G)\bigr)
\cong \Hom\bigl(\Sphere^1,\KG_1(\Unit)\bigr)
\]

Now we generalise to the case where~\(G\) is a proper locally compact groupoid with Haar system.  Let~\(X\) denote its object space.  We replace~\(\hats\) by
\[
\hats_X \defeq \hats\grot\Cont_0(X) \cong \Cont_0(X,\hats).
\]
Since the tensor product in the category of graded \(G\)\nb-\(\Cst\)\nb-algebras is taken over~\(X\), we have canonical isomorphisms
\[
\hats_X^{\grot_X n} \cong \Cont_0(X,\hats^{\grot n})\qquad\text{for all \(n\ge0\).}
\]
Thus the coalgebra structure on~\(\hats\) turns~\(\hats_X\) into a coassociative, cocommutative, counital coalgebra object in the symmetric monoidal category of \(\Z/2\)-graded \(G\)\nb-\(\Cst\)-algebras (as in Example~\ref{exa:KK_smc_groupoid_graded}).

Now we may extend the definitions above almost literally, replacing~\(\hats\) by~\(\hats_X\) and~\(\grot\) by~\(\grot_X\) where necessary.  That is, we let
\[
\KG_n(A) \defeq \Hom_G(\hats_X, A\grot_X (\Cliff_\R\grot\grok_G)^{\grot_X n}).
\]
The maps \(\KG_n(f)\) for a grading-preserving \(G\)\nb-equivariant \(^*\)\nb-homomorphism~\(f\) and the maps \(c_{n,m}^{A_1,A_2}\) are defined as above, and \(\iota_0\in\KG_0(\Unit)\) is the counit of~\(\hats_X\).

In the groupoid case, \(L^2G\) is a continuous field of Hilbert spaces over~\(X\), that is, a Hilbert module over \(\Cont_0(X) = \Unit\).  Since~\(G\) is proper, the fibrewise constant functions belong to~\(L^2G\).  They form a one-dimensional subfield of~\(L^2G\) and yield an embedding \(\Cont_0(X)\to \Comp(L^2G)\).  Tensoring with the embedding from a rank-one-projection in~\(\grok\), we get an embedding \(\gamma\colon \Cont_0(X)\to \Comp(L^2G)\grot\grok=\grok_G\).  As above, this yields
\[
\iota_1\defeq \beta\grot\gamma\colon \hats_X \to
\Cont_0(\R,\Cliff_\R)\grot\grok_G,
\]
which we view as a continuous map \(\iota_1\colon \Sphere^1\to\KG_1(\Unit)\).

\begin{lem}
  \label{lem:properties_c}
  The maps~\(c_{n,m}^{A_1,A_2}\) are natural, associative, commutative, and unital with respect to~\(\iota_0\).
\end{lem}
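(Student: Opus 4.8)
The plan is to verify each of the four properties directly from the formula
\eqref{eq:KG_product}, reducing everything to the coalgebra axioms for~\(\hats\) (or~\(\hats_X\))
established above, together with the coherence of the braided monoidal structure on \(\Z/2\)-graded
\(G\)\nb-\(\Cst\)-algebras. Throughout, recall that \(c_{n,m}^{A_1,A_2}(\alpha_1\wedge\alpha_2)
= \pi_{n,m}\circ(\alpha_1\grot\alpha_2)\circ\Delta\), where \(\pi_{n,m}\) is a canonical braiding
reordering isomorphism. Since \(\Sigma_n\) acts only on the \((\Cliff_\R\grot\grok_G)^{\grot n}\)
factors, the product is \(\Sigma_n\times\Sigma_m\)-equivariant by construction; I only need the
genuine algebraic identities.

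First, \emph{naturality}: given \(f_i\colon A_i\to B_i\), the square relating \(c_{n,m}^{A_1,A_2}\)
and \(c_{n,m}^{B_1,B_2}\) commutes because \((f_1\grot\id)\circ\alpha_1\) and
\((f_2\grot\id)\circ\alpha_2\) post-composed and reordered by \(\pi_{n,m}\) agree with first forming
\(\pi_{n,m}\circ(\alpha_1\grot\alpha_2)\circ\Delta\) and then applying \((f_1\grot f_2)\grot\id\);
this is just naturality of the braiding \(\pi_{n,m}\) in its evenly-graded \(A_i\) slots, plus
functoriality of~\(\grot\). \emph{Unitality}: by definition \(\iota_0=\epsilon\colon\hats\to\Unit\),
and \(c_{n,0}^{A,\Unit}(\alpha\wedge\iota_0)=\pi_{n,0}\circ(\alpha\grot\epsilon)\circ\Delta\). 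The
counit axiom \((\id_\hats\grot\epsilon)\circ\Delta=\id_\hats\) (recorded above) collapses this to
\(\alpha\) after identifying \(A\grot\Unit\cong A\) and observing \(\pi_{n,0}=\id\); the left-unit
case is symmetric, using \(\braid\circ\Delta=\Delta\) to swap the \(\epsilon\) to the other side, or
directly the mirror counit identity. \emph{Commutativity}: one must show that
\(c_{m,n}^{A_2,A_1}\circ\tau = \KG_{n+m}(\braid_{A_1,A_2})\circ(\text{shuffle})\circ c_{n,m}^{A_1,A_2}\),
where \(\tau\) is the symmetry of \(\wedge\) and the shuffle permutation interchanges the two blocks
of \(\Cliff_\R\grot\grok_G\) factors; this follows from cocommutativity \(\braid\circ\Delta=\Delta\)
of~\(\hats\) together with the hexagon/coherence identities for the braiding \(\pi\), which let one
slide the block transposition through.

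\emph{Associativity} is the main computation and the step I expect to be the real obstacle. One
compares \(c_{n+m,k}^{A_1\grot A_2,A_3}\circ(c_{n,m}^{A_1,A_2}\wedge\id)\) with
\(c_{n,m+k}^{A_1,A_2\grot A_3}\circ(\id\wedge c_{m,k}^{A_2,A_3})\), both mapping
\(\KG_n(A_1)\wedge\KG_m(A_2)\wedge\KG_k(A_3)\to\KG_{n+m+k}(A_1\grot A_2\grot A_3)\). On the
\(\hats\) side, coassociativity \((\Delta\grot\id)\circ\Delta=(\id\grot\Delta)\circ\Delta\) handles
the comultiplication; on the target \(\Cst\)-algebra side, one must check that the two resulting
composite reordering isomorphisms built out of the \(\pi_{\bullet,\bullet}\)'s coincide. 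That is
precisely a coherence statement for the symmetric monoidal category of \(\Z/2\)-graded
\(G\)\nb-\(\Cst\)-algebras (Examples~\ref{exa:KK_smc_graded} and~\ref{exa:KK_smc_groupoid_graded}):
both composites are braiding isomorphisms between the same source and target that move the
\(A_i\) blocks to the front in the same relative order, so by MacLane's coherence theorem for
symmetric monoidal categories they are equal. The only care needed is to track the Koszul signs,
but since each \(A_i\) may be genuinely graded while the \(\Cliff_\R\grot\grok_G\) factors are
permuted among themselves, coherence guarantees the signs match on both sides. In the groupoid
case the identical argument applies verbatim with \(\hats\) replaced by \(\hats_X\), \(\grot\) by
\(\grot_X\), and \(\Unit=\Cont_0(X)\), using that \(\hats_X\) inherits the coalgebra axioms fibrewise.
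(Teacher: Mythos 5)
Your proposal is correct and follows essentially the same route as the paper: each property is reduced to the corresponding coalgebra axiom for \(\hats\) (counit, cocommutativity \(\braid\circ\Delta=\Delta\), coassociativity) together with coherence of the symmetric monoidal structure on graded \(G\)\nb-\(\Cst\)-algebras, with the shuffle permutation appearing in the commutativity square exactly as in the paper. Your extra care in noting that the two reordering isomorphisms in the associativity check induce the same permutation of tensor factors (so that coherence applies) is exactly the point the paper summarises by saying the argument ``uses the coassociativity of \(\Delta\) and that \(\grot_X\) is a symmetric monoidal structure.''
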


\begin{proof}
  Naturality is obvious.  It means that
  \[
  c_{n,m}^{B_1,B_2}\circ (\KG_n(f_1)\wedge \KG_m(f_2))
  = \KG_{n+m}(f_1\grot_X f_2) \circ c_{n,m}^{A_1,A_2}
  \]
  for grading-preserving \(G\)\nb-equivariant \(^*\)\nb-homomorphisms \(f_j\colon A_j\to B_j\), \(j=1,2\).

  Associativity means that the following square commutes:
  \[
  \xymatrix@C+3em{
    \KG_n(A_1)\wedge \KG_m(A_2)\wedge \KG_p(A_3) \ar[r]^-{c^{A_1,A_2}_{n,m}\wedge\id} \ar[d]_{\id\wedge c^{A_2,A_3}_{m,p}}&
    \KG_{n+m}(A_1 \grot_X A_2)\wedge \KG_p(A_3) \ar[d]^{c^{A_1\grot_X A_2,A_3}_{n+m,p}}\\
    \KG_n(A_1)\wedge \KG_{m+p}(A_2 \grot_X A_3) \ar[r]^-{c^{A_1, A_2 \grot_X A_3}_{n,m+p}}&
    \KG_{n+m+p}(A_1 \grot_X A_2\grot_X A_3)
  }
  \]
  Indeed, both compositions map \(\alpha_1\wedge\alpha_2\wedge\alpha_3\) to the composite map
  \begin{multline*}
    \hats_X \xrightarrow{\Delta^2} \hats_X\grot_X\hats_X\grot_X\hats_X
    \xrightarrow{\alpha_1\grot_X\alpha_2\grot_X\alpha_3}\\
    A_1 \grot_X (\Cliff_\R\grot\grok_G)^{\grot_X n}\grot
    A_2 \grot_X (\Cliff_\R\grot\grok_G)^{\grot_X m}\grot
    A_3 \grot_X (\Cliff_\R\grot\grok_G)^{\grot_X p}
    \\\xrightarrow[\cong]{\pi}
    A_1 \grot_X A_2 \grot_X A_3 \grot_X (\Cliff_\R\grot\grok_G)^{\grot_X n+m+p},
  \end{multline*}
  where the last map is the braiding isomorphism for the permutation that does not change the order among the factors \(\Cliff_\R\grot\grok_G\).  This argument uses the coassociativity of~\(\Delta\) and that~\(\grot_X\) is a symmetric monoidal structure.

  Commutativity means that the following diagram commutes:
  \[
  \xymatrix@C+0em{
    \KG_n(A_1) \wedge \KG_m(A_2)\ar[d]^{c_{n,m}}\ar[rrr]^-{\braid}&&&
    \KG_m(A_2) \wedge \KG_n(A_1) \ar[d]_{c_{m,n}}\\
    \KG_{n+m}(A_1\grot_X A_2)\ar[rr]^-{\KG_{n+m}(\braid)}&&\KG_{n+m}(A_2\grot_X A_1)
    \ar[r]^-{\chi_{n,m}}&\KG_{m+n}(A_2\grot_X A_1),
  }
  \]
  where~\(\chi_{n,m}\) denotes the action of the shuffle permutation that moves the first~\(n\) entries to the end without changing the order among the first~\(n\) and the last~\(m\) entries.  This follows from the cocommutativity of~\(\Delta\); the permutation~\(\chi_{n,m}\) appears because replacing \(\alpha_1\grot_X\alpha_2\) by \(\alpha_2\grot_X\alpha_1\) for \(\alpha_j\in\KG_n(A_j)\), \(j=1,2\), exchanges both the factors \(A_1\) and~\(A_2\) and \((\Cliff_\R\grot\grok_G)^{\grot_X n}\) and \((\Cliff_\R\grot\grok_G)^{\grot_X m}\).

  The right unit property for~\(\iota_0\) means that the composition
  \[
  \KG_n(A)\wedge\Sphere^0 \xrightarrow{\id\wedge\iota_0}
  \KG_n(A)\wedge \KG_0(\Unit) \xrightarrow{c}
  \KG_n(A\grot \Unit) \cong
  \KG_n(A)
  \]
  is the canonical homeomorphism \(\KG_n(A)\wedge\Sphere^0 \cong
  \KG_n(A)\).  This follows because~\(\varepsilon\) is a counit
  for~\(\Delta\).  The left unit property follows from this and
  commutativity.
\end{proof}

Lemma~\ref{lem:properties_c} implies that the \(\Sigma_n\)\nb-spaces \(\KG_n(\Unit)\) with the multiplication maps~\(c_{n,m}^{\Unit,\Unit}\) and the units \(\iota_0\) and~\(\iota_1\) form a commutative, symmetric ring spectrum \(\KG(\Unit)\) in the sense of \cite{Schwede:Untitled_symmetric}*{Definitions I.1.3} (the centrality condition follows from commutativity), and that the \(\Sigma_n\)\nb-spaces \(\KG_n(A)\) with the action maps~\(c_{n,m}^{A,\Unit}\) form a right \(\KG(\Unit)\)-module \(\KG(A)\) in the sense of \cite{Schwede:Untitled_symmetric}*{Definitions I.1.4}.  Furthermore, the maps \(\KG_n(f)\) induced by a grading-preserving \(^*\)\nb-homomorphism \(f\colon A\to B\) form a morphism of \(\KG(\Unit)\)-modules \(\KG(f)\colon \KG(A)\to\KG(B)\).

The right \(\KG(\Unit)\)-modules form a symmetric monoidal category with respect to the smash product~\(\wedge_{\KG(\Unit)}\), which is defined so that a map \(X\wedge_{\KG(\Unit)} Y\to Z\) for \(\KG(\Unit)\)-modules \(X\), \(Y\) and~\(Z\) is a family of maps \(X_n\wedge Y_m\to Z_{n+m}\) that is \(\KG(\Unit)\)\nb-bilinear in the sense that the following two diagrams commute:
\[
\xymatrix{
   X_n\wedge Y_m \wedge \KG(\Unit)_p \ar[r]\ar[d]&
  X_{n}\wedge Y_{m+p}\ar[d]\\
   Z_{n+m}\wedge\KG(\Unit)_p \ar[r]&
  Z_{n+m+p}
}
\]
\[
\xymatrix{
  X_n\wedge Y_m\wedge \KG(\Unit)_p \ar[r]^{\braid}\ar[d]&
  X_n\wedge \KG(\Unit)_p\wedge Y_m \ar[r]&
  X_{n+p}\wedge Y_{m}\ar[d]\\
  Z_{n+m} \wedge  \KG(\Unit)_p\ar[r]&
  Z_{n+m+p} \ar[r]^{\chi_{m,p}}_{\cong}&
  Z_{n+p+m}
}
\]
where~\(\chi_{m,p}\) is the shuffle permutation of the last \(m\)
and~\(p\) numbers.  The tensor unit for~\(\wedge_{\KG(\Unit)}\) is
\(\KG(\Unit)\).

Lemma~\ref{lem:properties_c} implies that the maps~\(c_{n,m}^{A_1,A_2}\) are \(\KG(\Unit)\)-bilinear in this sense.  Thus they produce \(\KG(\Unit)\)-module homomorphisms
\[
c^{A_1,A_2}\colon \KG(A_1) \wedge_{\KG(\Unit)} \KG(A_2) \to \KG(A_1\wedge A_2).
\]

\begin{prop}
  The functor~\(\KG\) with the maps~\(c^{A_1,A_2}\) and the identity map on \(\KG(\Unit)\) is a normal, symmetric lax monoidal functor from the category of graded \(G\)\nb-\(\Cst\)-algebras to the category of \(\KG(\Unit)\)-modules.
\end{prop}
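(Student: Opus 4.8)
The plan is to reduce every clause of the statement to facts already established, above all to Lemma~\ref{lem:properties_c}. By construction \(\KG\) is a functor from graded \(G\)\nb-\(\Cst\)-algebras to \(\KG(\Unit)\)-modules: the formula \(\alpha\mapsto(f\grot_X\id)\circ\alpha\) defining \(\KG_n(f)\) is visibly functorial, and we already observed that \(\KG(f)\) is a morphism of \(\KG(\Unit)\)-modules. The unit morphism of the prospective monoidal functor is the identity \(\id_{\KG(\Unit)}\colon\KG(\Unit)\to\KG(\Unit)\), which is an isomorphism; hence the functor will be \emph{normal} as soon as it is shown to be lax symmetric monoidal. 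The structure maps are the \(\KG(\Unit)\)-module homomorphisms \(c^{A_1,A_2}\) assembled from the \(\KG(\Unit)\)-bilinear family \((c^{A_1,A_2}_{n,m})_{n,m}\); their naturality in \(A_1\) and \(A_2\) is precisely the naturality clause of Lemma~\ref{lem:properties_c}. What remains is to verify the coherence diagrams relating \(c^{A_1,A_2}\) to the associativity, unit and braiding isomorphisms of the two symmetric monoidal categories (see \cite{MacLane:CategoriesII}*{Chapter~XI.2} or \cite{Lewis-etal:Equivariant_stable_homotopy_theory}*{p.~126}).

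The key observation for this last step is that, by the very definition of \(\wedge_{\KG(\Unit)}\) recalled above, a \(\KG(\Unit)\)-module map out of an iterated smash product \(\KG(A_1)\wedge_{\KG(\Unit)}\cdots\wedge_{\KG(\Unit)}\KG(A_k)\) is the same datum as a suitably bilinear family of ``external'' maps \(\KG_{n_1}(A_1)\wedge\cdots\wedge\KG_{n_k}(A_k)\to\KG_{n_1+\cdots+n_k}(A_1\grot_X\cdots\grot_X A_k)\). Consequently, a diagram of \(\KG(\Unit)\)-module maps between such iterated smash products commutes precisely when the corresponding diagram of external maps commutes for every choice of multi-index. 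Under this reduction the associativity coherence diagram for \(c^{A_1,A_2}\) turns into the associativity square of Lemma~\ref{lem:properties_c} (which rests on the coassociativity of~\(\Delta\) and on \(\grot_X\) being symmetric monoidal); the two unit coherence triangles turn into the unitality of \(c_{n,m}\) with respect to \(\iota_0=\epsilon\) (using that~\(\epsilon\) is a counit for~\(\Delta\)); and the braiding coherence diagram turns into the commutativity square of Lemma~\ref{lem:properties_c} (which rests on the cocommutativity of~\(\Delta\)). Crucially, the shuffle permutations \(\chi_{n,m}\) that appear in those squares are exactly the ones built into the symmetry isomorphism of the symmetric monoidal category of \(\KG(\Unit)\)-modules, which exists because \(\KG(\Unit)\) is a \emph{commutative} ring spectrum; so after passing to \(\wedge_{\KG(\Unit)}\) the diagrams match without any leftover permutations.

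The main obstacle is bookkeeping rather than mathematics: one has to match the precise shape of the coherence axioms for lax symmetric monoidal functors with the associativity, commutativity and unitality statements of Lemma~\ref{lem:properties_c}, keeping careful track of the block and shuffle permutations of the tensor factors \(\Cliff_\R\grot\grok_G\) and of the distinction between \(\grot\) and \(\grot_X\) in the groupoid case, and verifying that all of these are absorbed by the definition of \(\wedge_{\KG(\Unit)}\) and of its symmetry. No new analytic or geometric input is required beyond what already went into Lemma~\ref{lem:properties_c}.
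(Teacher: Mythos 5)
Your proposal is correct and follows essentially the same route as the paper, whose proof simply observes that all required compatibility conditions are contained in Lemma~\ref{lem:properties_c}. Your additional bookkeeping --- unwinding \(\wedge_{\KG(\Unit)}\) into levelwise external maps and noting that the shuffle permutations are absorbed by the braiding of the module category over the commutative ring spectrum \(\KG(\Unit)\) --- is exactly the detail the paper leaves implicit.
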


\begin{proof}
  The compatibility conditions we need are contained in Lemma~\ref{lem:properties_c}.
\end{proof}

Proposition~\ref{pro:K_via_hats} yields
\begin{equation}
  \label{htpy_n}
  \pi_{k+n}  \KG_n(A)
  \cong \KK_{k+n}^G (\Unit, (\Cliff_\R\grot \grok_G)^n)
  \cong \KK_k^G(\Unit,A)
\end{equation}
for all \(n\ge1\) and \(k\ge -n\).  Furthermore, since~\(\iota_1\) is constructed from the generator of \(\KK_0(\Unit,\Cont_0(\R, \Cliff_\R))\), the map \(\KG_n(A)\to\Omega\KG_{n+1}(A)\) that it induces is a weak equivalence for all \(n\ge1\).  Thus \(\KG(A)\) is a positive \(\Omega\)\nb-spectrum in the sense of \cite{Schwede:Untitled_symmetric}*{Definition I.1.9}.  This implies that \(\KG(A)\) is semistable, so that~\eqref{htpy_n} computes the correct homotopy groups of the spectrum~\(\KG(A)\), namely, the Hom groups out of the sphere spectra~\(\Sphere[k]\) (see \cite{Schwede:Homotopy_symmetric}*{Example 4.2}, \cite{Hovey-Shipley-Smith:SymmSp}*{Proposition 5.6.4\,(2)}).  
Thus for all \(k\in \Z\) we get
\begin{multline}
  \label{eq:KG_homotopy}
  \Ho \Sym(\Sphere[k], \KG(A))
  \cong \pi_k \KG(A)
  = \colim_{n\to\infty} \pi_{k+n} \KG_n(A)
  \\\cong \colim_{n\to\infty}  \KK_{(k+n) - n}^G(\Unit, A)
  = \KK_k^G(\Unit, A).
\end{multline}

\begin{rem}
\label{rem:extra}
The functor~\(\KG\) is also compatible with other extra structure:
\begin{enumerate}
\item It commutes with pull-backs, that is, it maps the pull-back of a diagram \(A_1\to B\leftarrow A_2\) to the pull-back of \(\KG(A_1)\to \KG(B)\leftarrow \KG(A_2)\).
\item It maps suspensions to loop spaces, that is,
  \[
  \KG(\Cont_0(\R)\otimes A) \cong
  \KG(\Cont_0(\R, A)) \cong \Omega\KG(A)
  \]
  because a \(^*\)\nb-homomorphism to \(A\to\Cont_0(\R,B)\) is a pointed continuous map \(\Sphere^1\to\Hom(A,B)\).
\item It maps the cone of a grading-preserving \(G\)\nb-equivariant \(^*\)\nb-homomorphism \(f\colon A\to B\) to the homotopy fibre of \(\KG(f)\).  Recall that the cone of~\(f\) is
  \[
  C_f \defeq \{(a,b)\in A\oplus \Cont_0((-\infty,\infty],B) : f(a) = b(\infty)\}.
  \]
  An element of \(\KG_n(C_f)\) is a pair \((a,b)\), where \(a\in\KG_n(A)\) and~\(b\) is a path in \(\KG_n(B)\) starting at the basepoint and ending at \(\KG_n(f)(a)\).  This is the homotopy fibre of~\(\KG(f)\), compare \cite{Mandell-etal:Diagram_spectra}*{Definition~6.8}.
\end{enumerate}
\end{rem}

\subsection{Extension to KK-theory}
\label{sec:KG_on_KKG}

So far, the functor~\(\KG\) is defined on the level of \(\Cst\)\nb-algebras and \(^*\)\nb-homomorphisms, but we need a functor defined on Kasparov theory.  For this, we use the universal property of Kasparov theory.  This requires that we restrict attention to trivially graded \(\Cst\)\nb-algebras.  That is, we now consider the symmetric monoidal triangulated category \(\KKcat^G\) as in Example~\ref{exa:KK_smc}.

The universal property of Kasparov theory is formulated in the non-equivariant case in~\cite{Higson:Characterization_KK}.  It is extended to the equivariant case for locally compact groups by Thomsen.  The proof of the universal property in~\cite{Meyer:Equivariant} for locally compact groups also works for locally compact Hausdorff groupoids with Haar system.  We notice the following consequence:

\begin{thm}
  \label{thm:factor_through_KK}
  Let~\(G\) be a proper, locally compact, Hausdorff groupoid with Haar system.  If a functor \(\Cstarcat^G\to\Cat\) maps all \(\KK^G\)\nb-equivalences to isomorphisms, then it factors through the canonical functor \(\Cstarcat^G \to \KKcat^G\).
\end{thm}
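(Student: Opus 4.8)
The plan is to invoke the universal property of equivariant Kasparov theory in the form proved in \cite{Meyer:Equivariant} for locally compact groups, together with the remark that its proof carries over verbatim to locally compact Hausdorff groupoids with Haar system. Recall that the universal property characterises the canonical functor \(\Cstarcat^G\to\KKcat^G\) by the following properties: it is \emph{homotopy invariant} (it identifies homotopic \(^*\)\nb-homomorphisms), it is \emph{stable} (it inverts the corner embeddings \(A\to A\otimes\Comp(L^2G\otimes\ell^2\N)\) coming from the regular representation), and it is \emph{split-exact} (it takes split extensions of \(G\)\nb-\(\Cst\)\nb-algebras to split triangles, or at least to weakly exact sequences). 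The theorem of \cite{Higson:Characterization_KK}, extended equivariantly, states that \(\KKcat^G\) is the universal target: any functor \(\Cstarcat^G\to\Cat\) with these three properties factors uniquely through \(\Cstarcat^G\to\KKcat^G\).

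First I would observe that a functor \(F\colon\Cstarcat^G\to\Cat\) sending all \(\KK^G\)\nb-equivalences to isomorphisms automatically has all three properties above, since each of the structural maps in question is a \(\KK^G\)\nb-equivalence: a homotopy \(A\to\Cont([0,1],B)\) exhibits the two evaluations as \(\KK^G\)\nb-equivalent morphisms (because \(\Cont([0,1],B)\to B\) is a \(\KK^G\)\nb-equivalence and the composite with either endpoint inclusion is the identity on \(B\) in \(\KKcat^G\)); the corner embedding \(A\to A\otimes\grok_G\) is a \(\KK^G\)\nb-equivalence by stability of Kasparov theory; and for a split extension \(I\rightarrowtail E\twoheadrightarrow Q\), the induced triangle in \(\KKcat^G\) is (split) exact, so applying \(F\) and using that \(F\) inverts \(\KK^G\)\nb-equivalences gives the required exactness. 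Hence \(F\) satisfies the hypotheses of the universal property.

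Second, I would feed \(F\) into the universal property to obtain a factorisation \(F = \bar F\circ(\Cstarcat^G\to\KKcat^G)\) through a (necessarily unique) functor \(\bar F\colon\KKcat^G\to\Cat\). This is exactly the asserted conclusion. The only genuine content beyond bookkeeping is the claim that the proof of the universal property in \cite{Meyer:Equivariant}, written for locally compact groups, goes through for proper locally compact Hausdorff groupoids with Haar system; but this is precisely the input we are permitted to assume, as stated in the paragraph preceding the theorem, so no further argument is needed here.

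The step I expect to be most delicate, were one to write out full details rather than a sketch, is verifying that "inverts \(\KK^G\)\nb-equivalences'' really does imply the split-exactness (or, in Meyer's formulation, the relevant excision/half-exactness) property in the precise technical form demanded by the universal property — one must be careful that the \(\KK^G\)\nb-equivalences encode enough to reconstruct the triangulated structure and not merely the isomorphism classes. However, since split-exactness only involves \emph{split} extensions, where the connecting data is already visible at the level of \(^*\)\nb-homomorphisms, this reduces to the elementary observation recorded above, and no subtlety actually arises. Thus the proof is essentially immediate from the cited universal property.
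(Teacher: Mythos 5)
There is a genuine gap, and it sits exactly at the step you flag and then dismiss. The universal property you invoke --- every homotopy-invariant, stable, split-exact functor factors through \(\KKcat^G\) --- is a statement about functors into \emph{additive} categories (split-exactness, in the form \(F(E)\cong F(I)\oplus F(Q)\) via the canonical maps, cannot even be formulated otherwise), whereas the theorem concerns an arbitrary functor into an arbitrary category \(\Cat\), with the sole hypothesis that \(\KK^G\)-equivalences become isomorphisms. Moreover, even if \(\Cat\) were additive, your verification of split-exactness is circular: for a split extension \(I\rightarrowtail E\twoheadrightarrow Q\) the comparison between \(E\) and \(I\oplus Q\) is a \(\KK^G\)-class, not a \(^*\)-homomorphism (the map \((x,q)\mapsto i(x)+s(q)\) is not multiplicative because of the cross terms \(i(x)s(q')\)), so the hypothesis ``\(F\) inverts \(\KK^G\)-equivalences,'' which only concerns \(^*\)-homomorphisms that become invertible in \(\KKcat^G\), does not let you conclude anything about \(F(E)\) versus \(F(I)\oplus F(Q)\) unless \(F\) has already been extended to \(\KKcat^G\) --- which is what is to be proved. (By contrast, your deductions of homotopy invariance and stability are fine: evaluation maps \(\Cont([0,1],B)\to B\) and corner embeddings are honest equivariant \(^*\)-homomorphisms that are \(\KK^G\)-equivalences.)

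The paper's proof avoids this entirely by using a concrete model rather than the Higson-style universal property: since \(G\) is proper, \(\KK^G(A,B)\) is naturally the set of homotopy classes of \(G\)-equivariant \(^*\)-homomorphisms \(qA\to \Comp(L^2G\otimes\ell^2\N)\otimes B\) (no stabilisation of \(A\) is needed; \cite{Meyer:Equivariant}*{Theorem~8.7}), and the canonical maps \(qA\to A\) and \(B\to\Comp(L^2G\otimes\ell^2\N)\otimes B\) are \(^*\)-homomorphisms which are \(\KK^G\)-equivalences. A functor \(F\) inverting \(\KK^G\)-equivalences is automatically homotopy invariant (your argument), so one can define the extension on the class of \(\varphi\colon qA\to\Comp(L^2G\otimes\ell^2\N)\otimes B\) by composing \(F(\varphi)\) with the inverses of the images of these two canonical equivalences. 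This direct, localisation-style construction is what makes the statement true for an arbitrary target category; to salvage your route you would have to restrict to additive targets and additive functors and still supply a genuine proof of split-exactness, so it is not a repair of the stated theorem.
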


\begin{proof}
  The set \(\KK^G(A,B)\) is naturally isomorphic to the set of homotopy classes of \(G\)\nb-equivariant \(^*\)\nb-homomorphisms \(qA\to \Comp(L^2G\otimes \ell^2\N)\otimes B\) (we do not have to stabilise~\(A\) because~\(G\) is proper, compare \cite{Meyer:Equivariant}*{Theorem~8.7}).  There are canonical \(\KK^G\)\nb-equivalences \(qA\to A\) and \(B\to \Comp(L^2G\otimes\ell^2\N)\otimes B\).  We have to compose with the inverses of the maps they induce to extend a functor from \(\Cstarcat^G\) to \(\KKcat^G\).
\end{proof}

Let \(\Ho({\KG(\Unit)}\textrm-\Mod)\) be the homotopy category of right \(\KG(\Unit)\)-modules, obtained by inverting those \(\KG(\Unit)\)-module maps that are stable equivalences in~\(\Sym\), briefly called \emph{weak equivalences}.

\begin{thm}
  \label{the:KG_on_KKG}
  Let~\(G\) be a proper, locally compact, Hausdorff groupoid with Haar system.  
  The functor \(\KG\colon \Cstarcat^G \to {\KG(\Unit)}\textrm-\Mod\) induces a symmetric lax monoidal functor \(\KG\colon \KKcat^G \to \Ho( {\KG(\Unit)}\textrm-\Mod)\) making the following diagram commute:
  \[
  \xymatrix{
    \Cstarcat^G_{\phantom{G}} \ar[d] \ar[r]^-{\KG}
    & {\KG(\Unit)}\textrm-\Mod \ar[d] \\
    \KKcat^G_{\phantom{G}} \ar@{..>}[r]^-{\KG}
    & \Ho( {\KG(\Unit)}\textrm-\Mod).
  }
  \]
  This functor is triangulated.
\end{thm}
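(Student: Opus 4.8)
The plan is to factor $\KG$ through $\KKcat^G$ by means of Theorem~\ref{thm:factor_through_KK}, and then to transport the lax monoidal and the triangulated structures using the uniqueness part of that factorisation together with Remark~\ref{rem:extra}.

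First, I verify the hypothesis of Theorem~\ref{thm:factor_through_KK} for the composite $F$ of $\KG\colon\Cstarcat^G\to\KG(\Unit)\text{-}\Mod$ with the localisation $\KG(\Unit)\text{-}\Mod\to\Ho(\KG(\Unit)\text{-}\Mod)$. If $f\colon A\to B$ is a $G$-equivariant ${}^*$-homomorphism that becomes invertible in $\KKcat^G$, then $\KK^G_*(\Unit,f)$ is an isomorphism, so by the naturality of~\eqref{eq:KG_homotopy} the map $\KG(f)$ is a $\pi_*$-isomorphism of symmetric spectra, hence a stable equivalence, hence a weak equivalence of $\KG(\Unit)$-modules, hence an isomorphism in $\Ho(\KG(\Unit)\text{-}\Mod)$. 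Theorem~\ref{thm:factor_through_KK} then produces the dotted functor and the commuting square; I will use the uniqueness of such factorisations --~for functors and, just as well, for natural transformations between them~-- in both remaining steps.

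Next, the lax monoidal structure. Since $\KG(\Unit)$ is a commutative symmetric ring spectrum, its modules form a symmetric monoidal model category, so $\Ho(\KG(\Unit)\text{-}\Mod)$ is symmetric monoidal with unit $\KG(\Unit)$ and product the derived smash $\wedge^{\mathbb L}_{\KG(\Unit)}$; here I use that the category $\KKcat^G$ of Example~\ref{exa:KK_smc} is trivially graded, so $\grot=\otimes$ on it. Fix $A_1$. Both $A_2\mapsto\KG(A_1)\wedge^{\mathbb L}_{\KG(\Unit)}\KG(A_2)$ (because $\KG$ inverts $\KK^G$-equivalences and $\wedge^{\mathbb L}$ preserves isomorphisms) and $A_2\mapsto\KG(A_1\otimes A_2)$ (because $A_1\otimes\blank$ preserves $\KK^G$-equivalences, by Example~\ref{exa:KK_smc}) send $\KK^G$-equivalences to isomorphisms, so both factor through $\KKcat^G$, and so does the natural transformation $c^{A_1,A_2}$ (precomposed with the canonical comparison map $\KG(A_1)\wedge^{\mathbb L}\KG(A_2)\to\KG(A_1)\wedge\KG(A_2)$ in the homotopy category). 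Running the same argument in $A_1$ gives a natural transformation $\overline{c}^{A_1,A_2}\colon\KG(A_1)\wedge^{\mathbb L}_{\KG(\Unit)}\KG(A_2)\to\KG(A_1\otimes A_2)$ on $\KKcat^G\times\KKcat^G$, with $i=\id_{\KG(\Unit)}$. The coherence equalities for $(\KG,\overline{c},i)$ are equations of morphisms in $\Ho(\KG(\Unit)\text{-}\Mod)$; restricting along the localising functors from $\Cstarcat^G$ they become the coherence equalities for $(\KG,c,i)$, which hold by Lemma~\ref{lem:properties_c} and the proposition preceding this theorem, and hence --~by uniqueness of factorisations~-- they hold over $\KKcat^G$. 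Thus $\KG$ is a normal symmetric lax monoidal functor.

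Finally, the triangulated structure. The translation on $\KKcat^G$ is $A\mapsto\Cont_0(\R)\otimes A$, and Remark~\ref{rem:extra}(2) provides a natural isomorphism $\KG(\Cont_0(\R)\otimes A)\cong\Omega\,\KG(A)$. As $\KG(\Unit)$ is Bott periodic --~the Bott element of $\pi_2\KG(\Unit)\cong\KK^G_2(\Unit,\Unit)$ is invertible~-- smashing with it exhibits $\Sigma^2 M\cong M$ naturally for every $\KG(\Unit)$-module $M$, so $\Ho(\KG(\Unit)\text{-}\Mod)$ is $2$-periodic and $\Omega M\cong M[1]$ naturally; composing gives the required comparison of translation functors. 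Every exact triangle in $\KKcat^G$ is isomorphic to a mapping cone triangle $\Cont_0(\R)\otimes B\to C_f\to A\xrightarrow{f}B$ of a $G$-equivariant ${}^*$-homomorphism~$f$ (\cite{Meyer-Nest:BC_Localization}*{Appendix~A}); applying $\KG$ and using Remark~\ref{rem:extra}(3), which identifies $\KG(C_f)$ with the homotopy fibre of $\KG(f)$, together with the translation isomorphism, turns this into the homotopy-fibre sequence of $\KG(f)$, a distinguished triangle in the stable homotopy category $\Ho(\KG(\Unit)\text{-}\Mod)$. Hence $\KG$ sends exact triangles to distinguished triangles. The main obstacle is exactly this last step: one must line up the translation isomorphism, the Bott identification $\Omega\cong[1]$ and the connecting maps so that the image of a mapping cone triangle is --~on the nose, or up to the chosen isomorphism~-- the homotopy-fibre triangle with the right signs; the derived-versus-underived smash bookkeeping in the monoidal step is a milder version of the same kind of nuisance.
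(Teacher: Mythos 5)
Your proposal follows essentially the same route as the paper: show via \eqref{eq:KG_homotopy} that \(\KG\) sends \(\KK^G\)-equivalences to weak equivalences of \(\KG(\Unit)\)-modules, invoke Theorem~\ref{thm:factor_through_KK} to get the dotted functor, observe that the lax symmetric monoidal structure descends, and deduce triangulatedness from Remark~\ref{rem:extra} (suspensions go to loops, cones to homotopy fibres); your extra care with the derived smash product is a finer point the paper passes over silently.

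One caveat on your last step: the detour through Bott periodicity to identify \(\Omega M\cong M[1]\) is both unnecessary and, as stated, wrong in the real case, which the paper explicitly covers (there the Bott element sits in degree \(8\), not \(2\), so \(\Ho(\KG(\Unit)\textrm-\Mod)\) is not \(2\)-periodic). The paper's convention is that \(A\mapsto\Cont_0(\R)\otimes A\) is the \emph{de}suspension in \(\KKcat^G\), so Remark~\ref{rem:extra}(2) already matches translations on the nose --- \(\KG(A[-1])\cong\KG(A)[-1]\) --- and the mapping-cone triangle goes directly to the homotopy-fibre triangle with no periodicity needed. With that adjustment your argument is the paper's.
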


\begin{proof}
  For \(A\in \Cstarcat^G\) and \(i\in \Z\), we compute
  \begin{multline}
    \label{eq:Hom_KG_Unit}
    \Ho(\KG(\Unit)\textrm-\Mod)(\KG(\Unit)[i], \KG(A))
    \cong \Ho(\Sym)(\Sphere[i], \KG(A))\\
    \cong \KK^G(\Unit[i], A)
  \end{multline}
  by a standard isomorphism and~\eqref{eq:KG_homotopy}.  A \(\KG(\Unit)\)-module homomorphism is a weak equivalence if it induces an isomorphism on \(\Ho(\Sym)(\Sphere[i], \KG(A))\) for all~\(i\).  Thus \(\KG(f)\) is a weak equivalence if~\(f\) is a \(\KK^G\)\nb-equivalence.  Now Theorem~\ref{thm:factor_through_KK} yields the desired functor \(\KG\colon \KKcat^G\to \Ho( {\KG(\Unit)}\textrm-\Mod)\).  This functor clearly remains symmetric, normal, lax monoidal.  It is also additive.

  The triangulated structure on~\(\KKcat^G\) is defined using \(A\mapsto \Cont_0(\R)\otimes A\) as desuspension and diagrams isomorphic to mapping cone triangles as exact triangles.  We observed in Remark~\ref{rem:extra} that~\(\KG\) maps \(\Cont_0(\R)\otimes A\) to the loop space of~\(\KG(A)\), and the cone of a map~\(f\) to the homotopy fibre of \(\KG(f)\).  Hence the functor \(\KG\colon \KKcat^G\to \Ho( {\KG(\Unit)}\textrm-\Mod)\) is triangulated.
\end{proof}

\section{The Additivity Theorem}
\label{sec:additivity}

In this section we prove the Additivity Theorem.  Let
\(\KKcat^G\) be the triangulated category of
\(G\)\nb-equivariant bivariant \(\KKt\)-theory for a
\emph{cocompact} proper locally compact groupoid~\(G\) with
Haar system.  Recall that this is a symmetric monoidal category
with unit object \(\Unit = \Cont_0(X)\) for the object space~\(X\)
of~\(G\).
Let \(\Tri\defeq \langle \Unit \rangle_\loc\) be the localising triangulated subcategory of \(\KKcat^G\) generated by~\(\Unit\), and let \(\Tri_d\) denote the closed symmetric monoidal category of its dualisable objects, as in Section~\ref{sec:trace}.

\begin{prop}
  \label{prop:dualisable is compact}
  The category~\(\Tri_d\) coincides with~\(\Tri_c\), the full triangulated subcategory of compact\(_{\aleph_1}\) objects, and both are also equal to~\(\langle \Unit \rangle\), the thick triangulated subcategory of~\(\Tri\) \textup(or of \(\KKcat^G\)\textup) generated by the unit object~\(\Unit\).
\end{prop}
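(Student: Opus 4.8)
The plan is to establish three inclusions. The inclusion $\langle\Unit\rangle\subseteq\Tri_c$ is easy: the unit $\Unit$ is compact$_{\aleph_1}$ in $\KKcat^G$ because countable $\Cst$-direct sums give countable coproducts and $\KK^G_*(\Unit,\blank)$ commutes with them (this is a standard fact about equivariant Kasparov theory for proper cocompact groupoids, using that $\Unit$ is a ``$\sigma$-unital'' test object). Since the compact$_{\aleph_1}$ objects form a thick subcategory and $\Tri$ is generated by $\Unit$, every object of $\langle\Unit\rangle$ is compact$_{\aleph_1}$ in $\Tri$; hence $\langle\Unit\rangle\subseteq\Tri_c$.

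Next I would show $\Tri_c\subseteq\langle\Unit\rangle$. Here I invoke the general principle — due to Neeman, and applicable since $\Tri=\langle\Unit\rangle_\loc$ is a compactly$_{\aleph_1}$ generated triangulated category with a single compact$_{\aleph_1}$ generator $\Unit$ — that the subcategory of compact$_{\aleph_1}$ objects of $\Tri$ coincides with the thick subcategory generated by that generator. Concretely: $\langle\Unit\rangle$ is thick, $\Unit$ is compact$_{\aleph_1}$, and $\langle\Unit\rangle$ generates $\Tri$ as a localising subcategory; Neeman's theorem then forces $\Tri_c=\langle\Unit\rangle$. This already gives $\Tri_c=\langle\Unit\rangle$.

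It remains to identify these with $\Tri_d$, the dualisable objects. The inclusion $\langle\Unit\rangle\subseteq\Tri_d$ follows from the fact that the dualisable objects form a thick subcategory: it is closed under $\otimes$ and retracts by Proposition~\ref{prop:dualisable}, and — since $\otimes$ is exact in each variable in $\KKcat^G$ (Example~\ref{exa:KK_smc}) — it is closed under exact triangles and shifts. As $\Unit$ is trivially dualisable (it is its own dual), the thick subcategory $\langle\Unit\rangle$ it generates lies inside $\Tri_d$. For the reverse inclusion $\Tri_d\subseteq\Tri_c$, the key observation is that a dualisable object $A$ with dual $A^*$ satisfies $\Tri(A,\blank)\cong\Tri(\Unit,A^*\otimes\blank)$ naturally, and $A^*\otimes\blank$ commutes with countable coproducts (again because $\otimes$ preserves coproducts in each variable), while $\Tri(\Unit,\blank)$ does too by the first paragraph; hence $\Tri(A,\blank)$ commutes with countable coproducts, i.e.\ $A$ is compact$_{\aleph_1}$. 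This closes the circle: $\langle\Unit\rangle\subseteq\Tri_c\subseteq\langle\Unit\rangle$ and $\langle\Unit\rangle\subseteq\Tri_d\subseteq\Tri_c$, so all three coincide.

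The main obstacle is the cardinality bookkeeping. Because $\KKcat^G$ only has \emph{countable} coproducts, one cannot work with the usual notion of compact object but must use compact$_{\aleph_1}$ objects throughout, and one needs the $\aleph_1$-version of Neeman's localisation/generation theorem. One must check that $\Tri=\langle\Unit\rangle_\loc$ is well generated in the appropriate $\aleph_1$-sense and that the single generator $\Unit$ is itself compact$_{\aleph_1}$ — the latter resting on the precise statement that equivariant $\KK$ for a proper cocompact groupoid is ``countably additive'' in the second variable, which is where the cocompactness and properness hypotheses on $G$ are genuinely used. Once that foundational point is in place, the three inclusions above are formal.
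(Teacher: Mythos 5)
Your proposal is correct and follows essentially the same route as the paper: you establish that \(\Unit\) is compact\(_{\aleph_1}\) (the paper does this via \(\KK^G_*(\Unit,A)\cong\K_*(G\ltimes A)\) and countability/additivity of K\nb-theory), invoke the \(\aleph_1\)-version of Neeman's generation theorem to get \(\Tri_c=\langle\Unit\rangle\) (the paper cites \cite{dellAmbrogio:Tensor_triangular}*{Corollary~2.4}), use thickness of \(\Tri_d\) for \(\langle\Unit\rangle\subseteq\Tri_d\), and prove \(\Tri_d\subseteq\Tri_c\) by the adjunction \(\Tri(A,\blank)\cong\Tri(\Unit,A^*\otimes\blank)\), which is exactly the ``easy computation'' the paper attributes to \cite{Hovey-etal:Axiomatic}*{Theorem 2.1.3\,(a)}. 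No gaps beyond the details both you and the paper defer to the cited references.
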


\begin{proof}
  Since~\(G\) is cocompact, the \(\Rep(G)\)-modules \(\KKt^G(\Unit[i] , A)\) identify naturally with the topological \(G\)\nb-equivariant \(\K\)\nb-theory \(\Kg_i(A) \cong \K_i(G\ltimes A)\).  Since ordinary \(\K\)\nb-theory of separable \(\Cst\)\nb-algebras yields countable abelian groups and commutes with countable coproducts in \(\KKcat^G\), and since \(G\ltimes \blank\) commutes with coproducts and preserves separability, we conclude that the \(\otimes\)\nb-unit~\(\Unit\) is a compact\(_{\aleph_1}\) object of~\(\KKcat^G\).
  (See \cite{dellAmbrogio:Tensor_triangular}*{\S2.1} for this countable version on the more usual notions of compact objects in triangulated categories; this notion is necessary here because \(\KKcat^G\) only has countable, not arbitrary, coproducts.)

  Thus \(\Tri = \langle \Unit\rangle_\loc\subseteq \KKcat^G\) is compactly\(_{\aleph_1}\) generated, from which it follows that \(\Tri_c=\langle \Unit \rangle\) (\cite{dellAmbrogio:Tensor_triangular}*{Corollary~2.4}).  Moreover, since~\(\Tri\) is generated by the \(\otimes\)\nb-unit, its compact and dualisable objects coincide.  Indeed, it is not difficult to see that~\(\Tri_d\) is a thick triangulated subcategory of~\(\Tri\)
  (this uses that the contravariant Hom functors are sufficiently exact, see~\cite{dellAmbrogio:Tensor_triangular}*{\S2.3}).
  Since the \(\otimes\)\nb-unit is always dualisable, it follows that \(\Tri_c=\langle\Unit\rangle \subseteq \Tri_d\).
  On the other hand, we have seen that $\Unit\in \Tri_c$, and it follows by an easy computation (as in the proof of \cite{Hovey-etal:Axiomatic}*{Theorem 2.1.3\,(a)}) that \(\Tri_d\subseteq \Tri_c\).  Thus \(\Tri_d=\Tri_c\).
\end{proof}

\begin{prop}
  \label{prop:symmetric spectra and trace on KKG}
  The restriction on~\(\langle\Unit\rangle\) of the functor~\(\KG\) of Theorem~\textup{\ref{the:KG_on_KKG}} is a fully faithful, strong monoidal exact functor
  \(\KG\colon \langle\Unit\rangle \to \Ho (\KG(\Unit)\textrm-\Mod)\).
\end{prop}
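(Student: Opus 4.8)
The plan is to deduce everything from the homotopy-group computation~\eqref{eq:Hom_KG_Unit}, which identifies $\Ho({\KG(\Unit)}\textrm-\Mod)(\KG(\Unit)[i],\KG(A))$ with $\KK^G(\Unit[i],A)$, by a standard dévissage along the generator~$\Unit$ of the thick subcategory~$\langle\Unit\rangle$. Exactness of~$\KG$ is already contained in Theorem~\ref{the:KG_on_KKG}, and~$\KG$ is normal because its structure map on tensor units is $\id_{\KG(\Unit)}$; so the two things left to establish are that~$\KG$ is fully faithful on~$\langle\Unit\rangle$ and that its lax structure maps~$c^{A_1,A_2}$ become isomorphisms there.

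For full faithfulness I would fix $B\in\langle\Unit\rangle$ and let $\mathcal A_B\subseteq\langle\Unit\rangle$ consist of those~$A$ for which
\[
\KG\colon \KK^G(A[i],B)\longrightarrow \Ho({\KG(\Unit)}\textrm-\Mod)(\KG(A)[i],\KG(B))
\]
is an isomorphism for all $i\in\Z$. By~\eqref{eq:Hom_KG_Unit} the object~$\Unit$ lies in~$\mathcal A_B$. Both sides are cohomological functors of~$A$ — the source because $\KKcat^G$ is triangulated, the target because~$\KG$ is exact and $\Ho({\KG(\Unit)}\textrm-\Mod)$ is triangulated — and~$\KG$ supplies a morphism between the associated long exact sequences. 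Hence the five lemma shows~$\mathcal A_B$ is closed under exact triangles; it is visibly closed under (de)suspensions and retracts, so it is a thick subcategory containing~$\Unit$, and therefore equals all of~$\langle\Unit\rangle$. Letting~$B$ vary and taking $i=0$ gives full faithfulness on~$\langle\Unit\rangle$.

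For strong monoidality I would run the companion dévissage. Fix $A_1\in\KKcat^G$ and let $\mathcal B_{A_1}\subseteq\langle\Unit\rangle$ consist of those~$A_2$ for which $c^{A_1,A_2}\colon\KG(A_1)\wedge_{\KG(\Unit)}\KG(A_2)\to\KG(A_1\otimes A_2)$ is invertible in $\Ho({\KG(\Unit)}\textrm-\Mod)$. The object~$\Unit$ lies in~$\mathcal B_{A_1}$: by normality of~$\KG$ and unitality of the $\KG(\Unit)$-module structure, $c^{A_1,\Unit}$ is the canonical isomorphism $\KG(A_1)\wedge_{\KG(\Unit)}\KG(\Unit)\cong\KG(A_1)=\KG(A_1\otimes\Unit)$. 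The functor $A_2\mapsto\KG(A_1\otimes A_2)$ is exact, since $A_1\otimes\blank$ preserves exact triangles in~$\KKcat^G$ (Example~\ref{exa:KK_smc}) and~$\KG$ is exact; and $A_2\mapsto\KG(A_1)\wedge_{\KG(\Unit)}\KG(A_2)$ is exact because the homotopy category of modules over the commutative symmetric ring spectrum~$\KG(\Unit)$ is tensor triangulated, so its derived smash product is exact in each variable, and~$\KG$ is exact. Since $c^{A_1,\blank}$ is a natural transformation between these two exact functors, the same five-lemma and retract arguments make~$\mathcal B_{A_1}$ a thick subcategory containing~$\Unit$, whence $\mathcal B_{A_1}=\langle\Unit\rangle$; specialising to $A_1\in\langle\Unit\rangle$ finishes the proof.

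The real content is~\eqref{eq:Hom_KG_Unit}; the rest is a formal induction over cells. The point I expect to require the most care is the compatibility between the point-set level structure maps~$c^{A_1,A_2}$ built in Section~\ref{sec:K-enriched} and the \emph{derived} smash product $\wedge_{\KG(\Unit)}$ that carries the monoidal structure on $\Ho({\KG(\Unit)}\textrm-\Mod)$: one must know that~$\KG$ genuinely descends to a lax monoidal functor into the derived monoidal category (as asserted in Theorem~\ref{the:KG_on_KKG}) in order to legitimately invoke exactness of $\KG(A_1)\wedge_{\KG(\Unit)}\blank$, together with citing that modules over a commutative symmetric ring spectrum form a stable symmetric monoidal, hence tensor triangulated, homotopy category.
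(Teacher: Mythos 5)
Your proposal is correct and follows essentially the same route as the paper: the paper also establishes strong monoidality by fixing one tensor factor, observing that both \(\KG(\blank)\wedge_{\KG(\Unit)}\KG(B)\) and \(\KG(\blank\otimes B)\) are exact and that \(c\) is an isomorphism on (suspensions of) \(\Unit\), and then runs the standard thick-subcategory d\'evissage to all of \(\langle\Unit\rangle\); full faithfulness is likewise deduced from the isomorphism \(\KK^G(\Unit[i],\Unit[j])\congto\Ho(\KG(\Unit)\textrm-\Mod)(\KG(\Unit)[i],\KG(\Unit)[j])\) coming from~\eqref{eq:KG_homotopy} by the same kind of argument. You merely make explicit the ``standard triangulated argument'' the paper leaves implicit, and your closing caveat about the point-set versus derived smash product is handled at the same level of detail in the paper, namely by appeal to Theorem~\ref{the:KG_on_KKG} and the model structure of \cite{Mandell-etal:Diagram_spectra}.
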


\begin{proof}
  To show that the restriction of~\(\KG\) on~\(\langle\Unit\rangle\) is a strong monoidal functor, consider in \(\Ho (\KG(\Unit)\textrm-\Mod )\) the natural transformation
  \[
  c\colon \KG(\blank)\wedge_{\KG(\Unit)} \KG(B)\to \KG(\blank\otimes B),
  \]
  where~\(B\) is a fixed separable \(G\)\nb-\(\Cst\)-algebra.  Both the left and the right functors are exact.  By construction, \(c\)~is an isomorphism on suspensions of~\(\Unit\).  It follows by a standard triangulated argument that~\(c\) is an isomorphism on the thick triangulated subcategory of \(\KKcat^G\) generated by~\(\Unit\), which by Proposition~\ref{prop:dualisable is compact} is precisely~\(\langle\Unit\rangle\).

  Thus~\(\KG\) is strong monoidal on~\(\langle\Unit\rangle\).  Similarly, it is fully faithful on~\(\langle\Unit\rangle\) because on the full subcategory of the generators~\(\Unit[i]\) it is an isomorphism
  \begin{multline*}
    \KG\colon
    \KK^G(\Unit[i], \Unit[j])
    \congto
    \Ho(\KG(\Unit)\textrm-\Mod) (\KG(\Unit) [i], \KG(\Unit)[j] ) \\
    \cong \Ho(\Sym)(\Sphere[i], \KG(\Unit)[j])
  \end{multline*}
  by~\eqref{eq:KG_homotopy} (\(i,j\in \Z\)).
\end{proof}

\begin{proof}[Proof of Theorem~\textup{\ref{the:additivity_trace}}]
  We know from Proposition~\ref{prop:dualisable is compact} that the \(G\)\nb-\(\Cst\)-algebras under considerations are dualisable.
  Given a diagram in \(\langle\Unit\rangle\subseteq\KKcat^G\) as in the statement of the theorem,  by Proposition~\ref{prop:symmetric spectra and trace on KKG} we may apply the fully faithful exact functor~\(\KG\) to it to transfer the problem to the tensor triangulated category \(\Ho(\KG(\Unit)\textrm-\Mod)\), where the analogous statement is known to hold.
  Indeed, by \cite{Mandell-etal:Diagram_spectra}*{Theorem 12.1\,(i),(iii)} the module category over any commutative ring spectrum~\(R\) carries a symmetric monoidal model structure (with respect to the smash product~\(\wedge_R\), and with stable equivalences of the underlying symmetric spectra as weak equivalences);
  by \cite{May:Additivity}*{Theorem 1.9 and~\S5--7}), the induced tensor-triangulated structure in the homotopy category of any such model category satisfies additivity of traces as required by Theorem~\ref{the:additivity_trace}, and even in a more general form.

  Since~\(\KG\) is also strong monoidal on \(\langle\Unit\rangle\subseteq\KKcat^G\), by Lemma~\ref{lem:strong_monoidal_duals} it identifies the traces computed in~\(\langle\Unit\rangle\) with those computed in \(\Ho(\KG(\Unit)\textrm-\Mod)\).
\end{proof}

\section{Application to trace computations}
\label{sec:application}

Now we relate traces in triangulated categories to the Hattori--Stallings traces in certain module categories.  We work in the general setting of a tensor triangulated category \((\Tri,\otimes,\Unit)\), assuming additivity of traces.  Let
\[
R\defeq \Tri_*(\Unit,\Unit) = \bigoplus_{n\in\Z} \Tri_n(\Unit,\Unit)
\]
be the graded endomorphism ring of the tensor unit.  It is graded-commutative.

If~\(A\) is any object of~\(\Tri\), then \(M(A)\defeq \Tri_*(\Unit,A)
= \bigoplus_{n\in\Z} \Tri_n(\Unit,A)\) is an \(R\)\nb-module in a
canonical way, and an endomorphism \(f\in\Tri_n(A,A)\) yields a
degree-\(n\) endomorphism \(M(f)\) of \(M(A)\).  We will prove in Theorem~\ref{the:HS_trace} below that, under some
assumptions, the trace of~\(f\) equals the Hattori--Stallings trace of
\(M(f)\) and, in particular, depends only on \(M(f)\).

\begin{example}
  \label{exa:Lefschetz_classical}
  First we consider the example where~\(\Tri\) is the category~\(\KKcat\) of complex separable \(\Cst\)-algebras, \(A=\Cont(X)\) for a compact smooth manifold, and \(f\in\KK_0(A,A)\) is the class of the \(^*\)\nb-homomorphism induced by a smooth self-map of~\(X\) whose graph is transverse to the diagonal.  On the one hand, the trace of~\(f\) in \(\KKcat\) may be computed as a Kasparov product, and the result reduces to the usual expression for the Lefschetz invariant of~\(f\) in terms of fixed points (see~\cite{Emerson-Meyer:Equi_Lefschetz} for this and some equivariant generalisations).  On the other hand, \(\KK_*(\Unit,A)=\K^*(X)\), and the Hattori--Stallings trace of the map induced by~\(f\) agrees with the global cohomological formula for the Lefschetz invariant.  Thus our result generalises the Lefschetz Fixed Point Theorem.
\end{example}

Before we can state our theorem, we must define the Hattori--Stallings trace for endomorphisms of graded modules over graded rings.  This is well-known for ungraded rings (see~\cite{Bass:Euler_discrete}).  The grading causes some notational overhead here.  Let~\(R\) be a (unital) graded-commutative graded ring.  A finitely generated free \(R\)\nb-module is a direct sum of copies of \(R[n]\), where~\(R[n]\) denotes~\(R\) with degree shifted by~\(n\).  Let \(F\colon P\to P\) be a module endomorphism of such a free module, let us assume that~\(F\) is homogeneous of degree~\(d\).  We use an isomorphism
\begin{equation}
  \label{eq:iso_free}
  P\cong\bigoplus_{i=1}^r R[n_i]
\end{equation}
to rewrite~\(F\) as a matrix \((f_{ij})_{1\le i,j\le r}\) with \(R\)\nb-module homomorphisms \(f_{ij}\colon R[n_j]\to R[n_i]\) of degree~\(d\).  The entry~\(f_{ij}\) is given by right multiplication by some element of~\(R\) of degree \(n_i-n_j+d\).  The (super)trace \(\tr F\) is defined as
\[
\tr F \defeq \sum_{i=1}^r (-1)^{n_i} \tr f_{ii},
\]
this is an element of~\(R\) of degree~\(d\).

It is straightforward to check that~\(\tr F\) is well-defined, that is, independent of the choice of the isomorphism in~\eqref{eq:iso_free}.  Here we use that the degree-zero part of~\(R\) is central in~\(R\) (otherwise, we still get a well-defined element in the commutator quotient \(R_d/[R_d,R_0]\)).  Furthermore, if we shift the grading on~\(P\) by~\(n\), then the trace is multiplied by the sign \((-1)^n\) -- it is a supertrace.

If~\(P\) is a finitely generated projective graded \(R\)\nb-module, then \(P\oplus Q\) is finitely generated and free for some~\(Q\), and for an endomorphism~\(F\) of~\(P\) we let
\[
\tr F \defeq \tr (F\oplus 0\colon P\oplus Q\to P\oplus Q).
\]
This does not depend on the choice of~\(Q\).

A \emph{finite projective resolution} of a graded \(R\)\nb-module~\(M\) is a resolution
\begin{equation}
  \label{eq:finite_resolution}
  \dotsb \to P_\ell \xrightarrow{d_\ell}
  P_{\ell-1} \xrightarrow{d_{\ell-1}}
  \dotsb \xrightarrow{d_1} P_0 \xrightarrow{d_0} M
\end{equation}
of finite length by finitely generated projective graded \(R\)\nb-modules~\(P_j\).  \emph{We assume that the maps~\(d_j\) have degree one} (or at least odd degree).  Assume that~\(M\) has such a resolution and let \(f\colon M\to M\) be a module homomorphism.  Lift~\(f\) to a chain map \(f_j\colon P_j\to P_j\), \(j=0,\dotsc,\ell\).  We define the \emph{Hattori--Stallings trace} of~\(f\) as
\[
\tr(f) = \sum_{j=0}^\ell \tr(f_j).
\]
It may be shown that this trace does not depend on the choice of resolution.  It is important for this that we choose~\(d_j\) of degree one.  Since shifting the degree by one alters the sign of the trace of an endomorphism, the sum in the definition of the trace becomes an \emph{alternating} sum when we change conventions to have even-degree boundary maps~\(d_j\).  Still the trace changes sign when we shift the degree of~\(M\).

Recall that \((\Tri,\otimes,\Unit)\) is a tensor triangulated category that satisfies additivity of traces and that \(R\defeq \Tri_*(\Unit,\Unit)\) is a graded-commutative ring.

\begin{thm}
  \label{the:HS_trace}
  Let \(F\in\Tri_d(A,A)\) be an endomorphism of some object~\(A\) of~\(\Tri\).  Assume that~\(A\) belongs to the localising subcategory of~\(\Tri\) generated by~\(\Unit\).  If the graded \(R\)\nb-module \(M(A)\defeq \Tri_*(\Unit,A)\) has a finite projective resolution, then~\(A\) is dualisable in~\(\Tri\) and the trace of~\(F\) is equal to the Hattori--Stallings trace of the induced module endomorphism \(\Tri_*(\Unit,f)\) of~\(M(A)\).
\end{thm}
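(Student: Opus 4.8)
The plan is to realise~\(A\) as a finite iterated mapping cone mirroring the given projective resolution, and to compute the trace of~\(F\) and the Hattori--Stallings trace of~\(M(F)\) by one and the same telescoping of the Additivity Theorem.

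The first ingredient is a lemma about \emph{elementary} objects, by which I mean retracts of finite coproducts \(\bigoplus_i\Unit[n_i]\); these lie in the thick subcategory \(\langle\Unit\rangle\), which is idempotent complete and consists of dualisable objects (a retract of a dualisable object is dualisable). For an elementary object~\(\tilde P\) and \emph{every} object~\(Y\) of~\(\Tri\), the functor~\(M\) induces a natural isomorphism \(\Tri(\tilde P,Y)\cong\Hom_R(M\tilde P,MY)\), and \(M\tilde P\) is a finitely generated projective graded \(R\)-module; moreover, for an endomorphism~\(f\) of~\(\tilde P\) the categorical trace \(\tr(f)\in R\) equals the supertrace of the module endomorphism \(M(f)\) of~\(M\tilde P\). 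The isomorphism is a Yoneda computation: it holds for \(\tilde P=\Unit[n]\) because \(\Tri(\Unit[n],Y)\cong M(Y)_{-n}\cong\Hom_R(R[n],MY)\), and it propagates to finite coproducts and to retracts by additivity and idempotent completeness of~\(\langle\Unit\rangle\); crucially, it uses nothing about~\(Y\). For the trace equality one writes~\(\tilde P\) as the image of an idempotent on \(\bigoplus_i\Unit[n_i]\) and reduces to a matrix endomorphism of the latter; since \(\Unit[n]^*\cong\Unit[-n]\) and the braiding \(\Unit[n]\otimes\Unit[-n]\to\Unit[-n]\otimes\Unit[n]\) is multiplication by the Koszul sign \((-1)^n\), the categorical trace of the matrix equals \(\sum_i(-1)^{n_i}\) times its diagonal entries, which is exactly the module supertrace.

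Next I would induct on the length~\(\ell\) of a finite projective resolution \(0\to P_\ell\to\dotsb\to P_0\xrightarrow{d_0}M(A)\to0\). If \(\ell=0\), then \(M(A)\) is finitely generated projective; realise it as \(M(\tilde P_0)\) for an elementary~\(\tilde P_0\) (split the defining idempotent using the lemma), realise \(\id_{M(A)}\) by a morphism \(\phi_0\colon\tilde P_0\to A\), and note that \(M(\phi_0)\) is invertible. Since~\(\Unit\) generates the localising subcategory containing~\(A\) and~\(\tilde P_0\) (hence \(\mathrm{cofib}\,\phi_0\)), \(\phi_0\) is invertible; thus~\(A\) is dualisable and \(\tr(F)=\tr(\phi_0^{-1}F\phi_0)\) equals the supertrace of \(M(F)\) by the lemma, i.e.\ the Hattori--Stallings trace from the length-zero resolution. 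If \(\ell\ge1\), realise a suitable degree shift of~\(P_0\) as \(M(\tilde P_0)\) and the augmentation~\(d_0\) as a morphism \(\phi_0\colon\tilde P_0\to A\), and form a triangle
\[
\tilde P_0\xrightarrow{\phi_0}A\to C\to\tilde P_0[1].
\]
The long exact sequence of~\(M\) together with surjectivity of~\(d_0\) identifies \(M(C)\) with a degree shift of \(\ker d_0=\mathrm{im}(P_1\to P_0)\), which the tail \(P_\ell\to\dotsb\to P_1\) of the resolution resolves with length~\(\ell-1\); so the induction hypothesis gives~\(C\) dualisable, hence~\(A\) dualisable. Given \(F\colon A\to A\), lift \(M(F)\) to a chain map \((f_j)_{j=0}^\ell\) of the resolution, realise \(f_0\) as an endomorphism \(\tilde f_0\) of~\(\tilde P_0\) (lemma), verify \(\phi_0\tilde f_0=F\phi_0\) by applying~\(M\) and using faithfulness, complete to a map of triangles with third component \(f_C\colon C\to C\), and invoke the Additivity Theorem~\ref{the:additivity_trace} to get \(\tr(F)=\tr(\tilde f_0)+\tr(f_C)\). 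The first summand is the level-zero term of the Hattori--Stallings trace by the lemma; the second, by the induction hypothesis, is the Hattori--Stallings trace of the endomorphism that~\(f_0\) induces on \(\mathrm{im}(P_1\to P_0)\), computed from the tail \((f_j)_{j\ge1}\). Adding, one obtains \(\sum_{j=0}^\ell\tr(f_j)\), the Hattori--Stallings trace of~\(M(F)\).

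I expect the main obstacle to be the sign bookkeeping in the last step: the degree shift in the identification of~\(M(C)\), the sign \((-1)^n\) incurred when re-grading an elementary object in the trace lemma, and the sign in the additivity formula must combine so that the telescope produces the \emph{plain} sum \(\sum_j\tr(f_j)\) rather than an alternating one. This is exactly what the convention that all boundary maps have odd degree is for (as flagged in the discussion preceding the theorem), but making it precise requires carefully tracking how each cell~\(\tilde P_j\) is attached at the \(j\)-th stage. A subsidiary point, already addressed above, is that the isomorphism \(\Tri(\tilde P,Y)\cong\Hom_R(M\tilde P,MY)\) must hold for the arbitrary target \(Y=A\), which the Yoneda argument delivers with no hypothesis on~\(Y\).
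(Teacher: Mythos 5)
Your strategy is correct and is in substance the paper's own argument: both proofs realise the projective resolution inside \(\Tri\) by objects whose image under \(M\) is the prescribed projective module (your ``elementary objects'' and the isomorphism \(\Tri(\tilde P,Y)\cong\Hom_R(M\tilde P,MY)\) are exactly the values and the defining adjunction of the functor \(M^\bot\) used in the paper), and both telescope the Additivity Theorem over the resulting tower of exact triangles, finishing with the observation that the categorical trace on an elementary object equals the module supertrace. The difference is organisational: the paper quotes the ready-made phantom tower of \cite{Meyer:Homology_in_KK_II} --- all cones \(N_j\) at once, with the phantom property and the vanishing \(N_j=0\) for \(j>\ell\) (via the generator~\(\Unit\)) doing wholesale what your induction does step by step --- and then lifts \(F\) recursively through that tower; you rebuild the tower one cone at a time by induction on the length of the resolution, your base case (\(M(\mathrm{cone}\,\phi_0)=0\) plus ``\(\Unit\) generates'' forces \(\phi_0\) invertible) playing the role of the paper's vanishing argument. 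Your route is more self-contained, at the price of re-deriving inside the induction what the phantom tower provides; dualisability of \(A\) comes out the same way in both (finite iterated extension of dualisable objects, using that the dualisables form a thick subcategory).

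About the sign bookkeeping you flag: it does close up, but the one place where you can genuinely go wrong is the phrase ``realise a suitable degree shift of \(P_0\)''. The augmentation must be attached \emph{without} a shift, i.e.\ take \(M(\tilde P_0)=P_0\) itself and realise \(d_0\) by a degree-preserving morphism \(\tilde P_0\to A\) (read the paper's odd-degree convention as applying to the differentials \(d_j\), \(j\ge1\); the sanity check \(\tr(\id_{\Unit[n]})=(-1)^n\) pins this normalisation down --- if you shift \(P_0\) by one, every term of the telescope acquires a sign and you prove the formula with the wrong overall sign). With that normalisation the bookkeeping is automatic: the long exact sequence gives \(M(C)\cong(\ker d_0)[1]\), and the next cell is attached along \(d_1\), which has odd degree, so the elementary object attached at stage \(j\) realises \(P_j[2j]\) --- a shift by \emph{two} per stage (one suspension from the cone, one from the odd-degree differential). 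Since the supertrace is invariant under even shifts, no signs accumulate and the telescope produces the plain sum \(\sum_j\tr(f_j)\), exactly as in the paper, where \(M(\hat P_j)=P_j\) on the nose and additivity is applied to the triangles \(\hat P_j\to N_j\to N_{j+1}\to\hat P_j[1]\).
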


\begin{proof}
  Our main tool is the phantom tower over~\(A\), which is constructed in~\cite{Meyer:Homology_in_KK_II}.  We recall some details of this construction.

  Let~\(M^\bot\) be the functor from finitely generated projective \(R\)\nb-modules to~\(\Tri\) defined by the adjointness property \(\Tri(M^\bot(P),B) \cong \Tri(P,M(B))\) for all \(B\inOb\Tri\).  The functor~\(M^\bot\) maps the free rank-one module~\(R\) to~\(\Unit\), is additive, and commutes with suspensions; this determines~\(M^\bot\) on objects.  Since \(R=\Tri_*(\Unit,\Unit)\), \(\Tri_*(M^\bot(P_1),M^\bot(P_2))\) is isomorphic (as a graded Abelian group) to the space of \(R\)\nb-module homomorphisms \(P_1\to P_2\).  Furthermore, we have canonical isomorphisms \(M\bigl(M^\bot(P)\bigr) \cong P\) for all finitely generated projective \(R\)\nb-modules~\(P\).

  By assumption, \(M(A)\) has a finite projective resolution as
  in~\eqref{eq:finite_resolution}.  Using~\(M^\bot\), we lift
  it to a chain complex in~\(\Tri\), with entries \(\hat{P}_j
  \defeq M^\bot(P_j)\) and boundary maps \(\hat{d}_j\defeq
  M^\bot(d_j)\) for \(j\ge1\).  The map \(\hat{d}_0\colon
  \hat{P_0}\to A\) is the pre-image of~\(d_0\) under the
  adjointness isomorphism \(\Tri(M^\bot(P),B) \cong
  \Tri(P,M(B))\).  We get back the resolution of modules by
  applying~\(M\) to the chain complex
  \((\hat{P}_j,\hat{d}_j)\).

  Next, it is shown in~\cite{Meyer:Homology_in_KK_II} that we may embed this chain complex into a diagram
  \begin{equation}
    \label{eq:phantom_tower}
    \begin{gathered}
      \xymatrix@C=1em{
        A \ar@{=}[r]&
        N_0 \ar[rr]^{\iota_0^1}&&
        N_1 \ar[rr]^{\iota_1^2} \ar[dl]|-\circ^{\epsilon_0}&&
        N_2 \ar[rr]^{\iota_2^3} \ar[dl]|-\circ^{\epsilon_1}&&
        N_3 \ar[rr] \ar[dl]|-\circ^{\epsilon_2}&&
        \cdots \ar[dl]|-\circ\\
        &&\hat{P}_0 \ar[ul]^{\pi_0}&&
        \hat{P}_1 \ar[ul]^{\pi_1} \ar[ll]|-\circ^{\hat{d}_1}&&
        \hat{P}_2 \ar[ul]^{\pi_2} \ar[ll]|-\circ^{\hat{d}_2}&&
        \hat{P}_3 \ar[ul]^{\pi_3} \ar[ll]|-\circ^{\hat{d}_3}&&
        \cdots \ar[ul] \ar[ll]|-\circ
      }
 \end{gathered}
  \end{equation}
  where the triangles involving~\(\hat{d}_j\) commute and the others
  are exact.  This diagram is called the \emph{phantom tower}
  in~\cite{Meyer:Homology_in_KK_II}.  The circles indicate maps of
  degree one.

  Since \(\hat{P}_j=0\) for \(j>\ell\), the maps~\(\iota_j^{j+1}\) are invertible for \(j>\ell\).  Furthermore, a crucial property of the phantom tower is that these maps~\(\iota_j^{j+1}\) are \emph{phantom maps}, that is, they induce the zero map on \(\Tri_*(\Unit,\blank)\).  Together, these facts imply that \(M(N_j)=0\) for \(j>\ell\).  Since we assumed~\(\Unit\) to be a generator of~\(\Tri\), this further implies \(N_j=0\) for \(j>\ell\).

  Next we recursively extend the endomorphism~\(F\) of \(A=N_0\) to an endomorphism of the phantom tower.  We start with \(F_0=F\colon N_0\to N_0\).  Assume \(F_j\colon N_j\to N_j\) has been constructed.  As in~\cite{Meyer:Homology_in_KK_II}, we may then lift~\(F_j\) to a map \(\hat{F}_j\colon \hat{P}_j\to \hat{P}_j\) such that the square
  \[
  \xymatrix{
    \hat{P}_j \ar[r]^{\pi_j}\ar[d]_{\hat{F}_j}&N_j\ar[d]^{F_j}\\
    \hat{P}_j \ar[r]_{\pi_j}&N_j
  }
  \]
  commutes.  Now we apply the Additivity
  Theorem~\ref{the:additivity_trace} to construct an endomorphism
  \(F_{j+1}\colon N_{j+1}\to N_{j+1}\) such that
  \((\hat{F}_j,F_j,F_{j+1})\) is a triangle morphism and \(\tr (F_j) =
  \tr (\hat{F}_j) + \tr (F_{j+1})\).  Then we repeat the recursion
  step with~\(F_{j+1}\) and thus construct a sequence of
  maps~\(F_j\).  We get
  \[
  \tr(F)
  = \tr(F_0)
  = \tr(\hat{F}_0) + \tr(F_1)
  = \dotsb
  = \tr(\hat{F}_0) + \dotsb + \tr(\hat{F}_\ell) +  \tr(F_{\ell+1}).
  \]
  Since \(N_{\ell+1}=0\), we may leave out the last term.

  Finally, it remains to observe that the trace
  of~\(\hat{F}_j\) as an endomorphism of~\(\hat{P}_j\) agrees
  with the trace of the induced map on the projective
  module~\(P_j\).  Since both traces are additive with respect
  to direct sums of maps, the case of general finitely
  generated projective modules reduces first to free modules
  and then to free modules of rank one.  Both traces change by
  a sign if we suspend or desuspend once, hence we reduce to
  the case of endomorphisms of~\(\Unit\), which is trivial.
  Hence the computation above does indeed yield the
  Hattori--Stallings trace of~\(M(A)\) as asserted.
\end{proof}

\begin{rem}
  Note that if a module has a finite projective resolution, then it
  must be finitely generated.  Conversely, if the graded ring~\(R\) is
  \emph{coherent} and \emph{regular}, then any finitely generated
  module has a finite projective resolution.  (Regular means that every
  finitely generated module has a finite \emph{length} projective resolution;
  coherent means that every finitely generated homogeneous ideal is
  finitely presented~-- for instance, this holds if~\(R\) is (graded)
  Noetherian; coherence implies that any finitely generated graded
  module has a resolution by finitely generated projectives.)

  Moreover, if~\(R\) is coherent, an induction argument shows (as in
  the proof of Proposition~\ref{prop:dualisable is compact}) that for
  every \(A\in \langle \Unit \rangle =(\langle \Unit \rangle_\loc)_d\)
  the module~\(M(A)\) is finitely generated; and if~\(R\) is also
  regular, each such~\(M(A)\) has a finite projective resolution.

  In conclusion: if~\(R\) is regular and coherent, an object \(A\in
  \langle \Unit \rangle_\loc\) is dualisable if and only if~\(M(A)\)
  has a finite projective resolution.
\end{rem}

\begin{proof}[Proof of Theorem~\textup{\ref{thm:HS_for_KKG}}.]
  We specialise to equivariant Kasparov theory for a compact
  group~\(G\) and complex \(\Cst\)\nb-algebras.  In this case,
  the tensor unit is \(\Unit=\C\) and the ring
  \(R=\KK^G_*(\C,\C)\) becomes the tensor product of the
  representation ring \(\Rep(G)\) of~\(G\) with the ring of
  Laurent polynomials \(\Z[\beta,\beta^{-1}]\) in one
  variable~\(\beta\) of degree~\(2\), which generates Bott
  periodicity.  As a result, the category of graded
  \(R\)\nb-modules is equivalent to the category of
  \(\Z/2\)-graded \(\Rep(G)\)-modules.  Such an object is
  projective or finitely generated if and only if its even and
  odd part are projective or finitely generated
  \(\Rep(G)\)-modules, respectively.  Hence we are essentially
  dealing with pairs of ungraded modules over the ungraded ring
  \(\Rep(G)\).

  The Hattori--Stallings trace of an even-degree endomorphism
  of a \(\Z/2\)-graded \(\Rep(G)\)-module \((M_+,M_-)\) is the
  \emph{difference} of the Hattori--Stallings traces on the
  \(\Rep(G)\)-modules \(M_+\) and~\(M_-\), respectively, where
  we are now talking about ungraded modules over ungraded
  rings.

  The modules we need are of the form \(\Tri_*(\Unit,A)\) for a
  separable \(G\)\nb-\(\Cst\)-algebra~\(A\), and this is
  isomorphic to the \(G\)\nb-equivariant \(\K\)\nb-theory
  \(\K_*^G(A) \cong \KK_*^G(\C,A)\).
  Theorem~\ref{the:HS_trace} only applies if~\(A\) belongs to
  the localising subcategory generated by~\(\Unit\).  For a
  finite group~\(G\), this is a rather unnatural assumption.
  Furthermore, in that case there are many modules without
  finite length projective resolutions.

  Under the assumption that~\(G\) is a connected Lie group with
  torsion-free fundamental group (also called \emph{Hodgkin Lie
    group}), results of~\cite{Meyer-Nest:BC_Coactions} imply
  that the localising subcategory~\(\langle \Unit
  \rangle_\loc\) of \(\KKcat^G\) contains a separable
  \(G\)\nb-\(\Cst\)-algebra~\(A\) if and only if~\(A\) belongs
  to the non-equivariant bootstrap class in \(\KKcat\), that
  is, there is an invertible element in \(\KK(A,B)\) for a
  commutative \(\Cst\)\nb-algebra~\(B\) (no
  \(G\)\nb-equivariance is required).  Furthermore, the ring
  \(\Rep(G)\) is regular and Noetherean.  Thus a
  (\(\Z/2\)-graded) \(\Rep(G)\)-module has a finite projective
  resolution if and only if it is finitely generated as a
  \(\Rep(G)\)-module.  Thus Theorem~\ref{thm:HS_for_KKG} is a
  special case of Theorem~\ref{the:HS_trace}.
\end{proof}

\begin{bibdiv}
  \begin{biblist}
\bib{Bass:Euler_discrete}{article}{
  author={Bass, Hyman},
  title={Euler characteristics and characters of discrete groups},
  journal={Invent. Math.},
  volume={35},
  date={1976},
  pages={155--196},
  issn={0020-9910},
  review={\MRref {0432781}{55\,\#5764}},
  doi={10.1007/BF01390137},
}

\bib{Bunke-Joachim-Stolz:Classifying_K}{article}{
  author={Bunke, Ulrich},
  author={Joachim, Michael},
  author={Stolz, Stephan},
  title={Classifying spaces and spectra representing the $K$\nobreakdash -theory of a graded $C^*$\nobreakdash -algebra},
  conference={ title={High-dimensional manifold topology}, },
  book={ publisher={World Sci. Publ., River Edge, NJ}, },
  date={2003},
  pages={80--102},
  review={\MRref {2048716}{2005d:19006}},
  doi={10.1142/9789812704443-0003},
}

\bib{dellAmbrogio:Tensor_triangular}{article}{
  author={Dell'Ambrogio, Ivo},
  title={Tensor triangular geometry and $KK$-theory},
  journal={J. Homotopy Relat. Struct.},
  volume={5},
  date={2010},
  number={1},
  pages={319--358},
}

\bib{Echterhoff-Emerson-Kim:Duality}{article}{
  author={Echterhoff, Siegfried},
  author={Emerson, Heath},
  author={Kim, Hyun Jeong},
  title={$KK$-theoretic duality for proper twisted actions},
  journal={Math. Ann.},
  volume={340},
  date={2008},
  number={4},
  pages={839--873},
  issn={0025-5831},
  review={\MRref {2372740}{}},
  doi={10.1007/s00208-007-0171-6},
}

\bib{Emerson-Meyer:Equi_Lefschetz}{article}{
  author={Emerson, Heath},
  author={Meyer, Ralf},
  title={Equivariant Lefschetz maps for simplicial complexes and smooth manifolds},
  date={2009},
  volume={345},
  number={3},
  pages={599--630},
  issn={0025-5831},
  journal={Math. Ann.},
  review={\MRref {2534110}{}},
  doi={10.1007/s00208-009-0367-z},
}

\bib{Emerson-Meyer:Dualities}{article}{
  author={Emerson, Heath},
  author={Meyer, Ralf},
  title={Dualities in equivariant Kasparov theory},
  journal={New York J. Math.},
  date={2010},
  volume={16},
  pages={245--313},
  review={\MRref {2740579}{}},
}

\bib{Guentner-Higson-Trout:Equivariant_E}{article}{
  author={Guentner, Erik},
  author={Higson, Nigel},
  author={Trout, Jody},
  title={Equivariant $E$\nobreakdash -theory for $C^*$\nobreakdash -algebras},
  journal={Mem. Amer. Math. Soc.},
  volume={148},
  date={2000},
  number={703},
  pages={viii+86},
  issn={0065-9266},
  review={\MRref {1711324}{2001c:46124}},
}

\bib{Haag:Graded}{article}{
  author={Haag, Ulrich},
  title={On $\mathbf Z/2\mathbf Z$-graded $KK$-theory and its relation with the graded Ext-functor},
  journal={J. Operator Theory},
  volume={42},
  date={1999},
  number={1},
  pages={3--36},
  issn={0379-4024},
  review={\MRref {1694805}{2000j:19004}},
}

\bib{Higson:Characterization_KK}{article}{
  author={Higson, Nigel},
  title={A characterization of $KK$-theory},
  journal={Pacific J. Math.},
  volume={126},
  date={1987},
  number={2},
  pages={253--276},
  issn={0030-8730},
  review={\MRref {869779}{88a:46083}},
}

\bib{Higson-Kasparov:Operator_K}{article}{
  author={Higson, Nigel},
  author={Kasparov, Gennadi},
  title={Operator $K$\nobreakdash -theory for groups which act properly and isometrically on Hilbert space},
  journal={Electron. Res. Announc. Amer. Math. Soc.},
  volume={3},
  date={1997},
  pages={131--142},
  issn={1079-6762},
  review={\MRref {1487204}{99e:46090}},
}

\bib{Hovey:Model_cats}{book}{
  author={Hovey, Mark},
  title={Model categories},
  series={Mathematical Surveys and Monographs},
  volume={63},
  publisher={American Mathematical Society},
  place={Providence, RI},
  date={1999},
  pages={xii+209},
  isbn={0-8218-1359-5},
  review={\MRref {1650134}{99h:55031}},
}

\bib{Hovey-etal:Axiomatic}{article}{
  author={Hovey, Mark},
  author={Palmieri, John H.},
  author={Strickland, Neil P.},
  title={Axiomatic stable homotopy theory},
  journal={Mem. Amer. Math. Soc.},
  volume={128},
  date={1997},
  number={610},
  pages={x+114},
  issn={0065-9266},
  review={\MRref {1388895}{98a:55017}},
}

\bib{Hovey-Shipley-Smith:SymmSp}{article}{
  author={Hovey, Mark},
  author={Shipley, Brooke},
  author={Smith, Jeff},
  title={Symmetric spectra},
  journal={J. Amer. Math. Soc.},
  volume={13},
  date={2000},
  number={1},
  pages={149--208},
  issn={0894-0347},
  review={\MRref {1695653}{2000h:55016}},
  doi={10.1090/S0894-0347-99-00320-3},
}

\bib{Joachim-Johnson:Realizing_KK}{article}{
  author={Joachim, Michael},
  author={Johnson, Mark W.},
  title={Realizing Kasparov's $KK$-theory groups as the homotopy classes of maps of a Quillen model category},
  conference={ title={An alpine anthology of homotopy theory}, },
  book={ series={Contemp. Math.}, volume={399}, publisher={Amer. Math. Soc.}, place={Providence, RI}, },
  date={2006},
  pages={163--197},
  review={\MRref {2222510}{2007c:46070}},
}

\bib{Joachim-Stolz:Enrichment}{article}{
  author={Joachim, Michael},
  author={Stolz, Stephan},
  title={An enrichment of $KK$-theory over the category of symmetric spectra},
  journal={M\"unster J. Math.},
  volume={2},
  date={2009},
  pages={143--182},
  issn={1867-5778},
  review={\MRref {2545610}{}},
}

\bib{Kasparov:Operator_K}{article}{
  author={Kasparov, Gennadi G.},
  title={The operator \(K\)\nobreakdash -functor and extensions of \(C^*\)\nobreakdash -algebras},
  language={Russian},
  journal={Izv. Akad. Nauk SSSR Ser. Mat.},
  volume={44},
  date={1980},
  number={3},
  pages={571--636, 719},
  issn={0373-2436},
  translation={ language={English}, journal={Math. USSR-Izv.}, volume={16}, date={1981}, number={3}, pages={513--572 (1981)}, },
  review={\MRref {582160}{81m:58075}},
}

\bib{Kasparov:Novikov}{article}{
  author={Kasparov, Gennadi G.},
  title={Equivariant \(KK\)-theory and the Novikov conjecture},
  journal={Invent. Math.},
  volume={91},
  date={1988},
  number={1},
  pages={147--201},
  issn={0020-9910},
  review={\MRref {918241}{88j:58123}},
  doi={10.1007/BF01404917},
}

\bib{LeGall:KK_groupoid}{article}{
  author={Le Gall, Pierre-Yves},
  title={Th\'eorie de Kasparov \'equivariante et groupo\"\i des. I},
  language={French, with English and French summaries},
  journal={\(K\)\nobreakdash -Theory},
  volume={16},
  date={1999},
  number={4},
  pages={361--390},
  issn={0920-3036},
  review={\MRref {1686846}{2000f:19006}},
  doi={10.1023/A:1007707525423},
}

\bib{Lewis-etal:Equivariant_stable_homotopy_theory}{book}{
  author={Lewis, L. G., Jr.},
  author={May, J. P.},
  author={Steinberger, M.},
  author={McClure, J. E.},
  title={Equivariant stable homotopy theory},
  series={Lecture Notes in Mathematics},
  volume={1213},
  note={With contributions by J. E. McClure},
  publisher={Springer-Verlag},
  place={Berlin},
  date={1986},
  pages={x+538},
  isbn={3-540-16820-6},
  review={\MRref {866482}{88e:55002}},
}

\bib{MacLane:CategoriesII}{book}{
  author={Mac Lane, Saunders},
  title={Categories for the working mathematician},
  series={Graduate Texts in Mathematics},
  volume={5},
  edition={2},
  publisher={Springer-Verlag},
  place={New York},
  date={1998},
  pages={xii+314},
  isbn={0-387-98403-8},
  review={\MRref {1712872}{2001j:18001}},
}

\bib{Mandell-etal:Diagram_spectra}{article}{
  author={Mandell, M. A.},
  author={May, J. P.},
  author={Schwede, S.},
  author={Shipley, B.},
  title={Model categories of diagram spectra},
  journal={Proc. London Math. Soc. (3)},
  volume={82},
  date={2001},
  number={2},
  pages={441--512},
  issn={0024-6115},
  review={\MRref {1806878}{2001k:55025}},
  doi={10.1112/S0024611501012692},
}

\bib{May:Additivity}{article}{
  author={May, J. Peter},
  title={The additivity of traces in triangulated categories},
  journal={Adv. Math.},
  volume={163},
  date={2001},
  number={1},
  pages={34--73},
  issn={0001-8708},
  review={\MRref {1867203}{2002k:18019}},
  doi={10.1006/aima.2001.1995},
}

\bib{Meyer:Equivariant}{article}{
  author={Meyer, Ralf},
  title={Equivariant Kasparov theory and generalized homomorphisms},
  journal={\(K\)\nobreakdash -Theory},
  volume={21},
  date={2000},
  number={3},
  pages={201--228},
  issn={0920-3036},
  review={\MRref {1803228}{2001m:19013}},
  doi={10.1023/A:1026536332122},
}

\bib{Meyer:Homology_in_KK_II}{article}{
  author={Meyer, Ralf},
  title={Homological algebra in bivariant \(\textup {K}\)\nobreakdash -theory and other triangulated categories. II},
  journal={Tbil. Math. J.},
  volume={1},
  date={2008},
  pages={165--210},
  issn={1875-158X},
  review={\MRref {2563811}{2011c:19012}},
}

\bib{Meyer-Nest:BC_Localization}{article}{
  author={Meyer, Ralf},
  author={Nest, Ryszard},
  title={The Baum--Connes conjecture via localization of categories},
  journal={Lett. Math. Phys.},
  volume={69},
  date={2004},
  pages={237--263},
  issn={0377-9017},
  review={\MRref {2104446}{2005k:19010}},
  doi={10.1007/s11005-004-1831-z},
}

\bib{Meyer-Nest:BC_Coactions}{article}{
  author={Meyer, Ralf},
  author={Nest, Ryszard},
  title={An analogue of the Baum--Connes isomorphism for coactions of compact groups},
  journal={Math. Scand.},
  volume={100},
  date={2007},
  number={2},
  pages={301--316},
  issn={0025-5521},
  review={\MRref {2339371}{}},
}

\bib{Ostvaer:Homotopy_Cstar_book}{book}{
  author={{\O }stv{\ae }r, Paul Arne},
  title={Homotopy theory of $C^*$\nobreakdash -algebras},
  series={Frontiers in Mathematics},
  publisher={Birkh\"auser/Springer Basel AG},
  place={Basel},
  date={2010},
  pages={139},
  isbn={978-3-0346-0564-9},
  review={\MRref {2723902}{}},
  doi={10.1007/978-3-0346-0565-6},
}

\bib{Schwede:Untitled_symmetric}{article}{
  author={Schwede, Stefan},
  title={An untitled book project about symmetric spectra},
  date={2007},
  eprint={http://www.math.uni-bonn.de/~schwede/SymSpec.pdf},
}

\bib{Schwede:Homotopy_symmetric}{article}{
  author={Schwede, Stefan},
  title={On the homotopy groups of symmetric spectra},
  journal={Geom. Topol.},
  volume={12},
  date={2008},
  number={3},
  pages={1313--1344},
  issn={1465-3060},
  review={\MRref {2421129}{2009c:55006}},
  doi={10.2140/gt.2008.12.1313},
}

\bib{Skandalis:KK_survey}{article}{
  author={Skandalis, Georges},
  title={Kasparov's bivariant \(K\)\nobreakdash -theory and applications},
  journal={Exposition. Math.},
  volume={9},
  date={1991},
  number={3},
  pages={193--250},
  issn={0723-0869},
  review={\MRref {1121156}{92h:46101}},
}

\bib{Trout:Graded_K}{article}{
  author={Trout, Jody},
  title={On graded $K$\nobreakdash -theory, elliptic operators and the functional calculus},
  journal={Illinois J. Math.},
  volume={44},
  date={2000},
  number={2},
  pages={294--309},
  issn={0019-2082},
  review={\MRref {1775323}{2002f:19009}},
}

\bib{Tu:Novikov}{article}{
  author={Tu, Jean-Louis},
  title={La conjecture de Novikov pour les feuilletages hyperboliques},
  language={French, with English and French summaries},
  journal={\(K\)\nobreakdash -Theory},
  volume={16},
  date={1999},
  number={2},
  pages={129--184},
  issn={0920-3036},
  review={\MRref {1671260}{99m:46163}},
  doi={10.1023/A:1007756501903},
}
  \end{biblist}
\end{bibdiv}
\end{document}